\NeedsTeXFormat{LaTeX2e}
[1994/06/01]

\documentclass[12pt,reqno,intlimits,english,centertags]{amsart}

\usepackage[T1]{fontenc}

\usepackage{ifthen,calc}
\usepackage{babel}
\usepackage{xspace}
\usepackage{amsmath}

\pagestyle{plain}
\usepackage{lmodern}
\usepackage{amsthm}


\numberwithin{equation}{section}
\theoremstyle{plain}
\newtheorem{theorem}{Theorem}[section]
\newtheorem{lemma}[theorem]{Lemma}

\newtheorem{proposition}[theorem]{Proposition}
\theoremstyle{remark}
\newtheorem{remark}[theorem]{Remark}
\theoremstyle{definition}
\newtheorem{definition}[theorem]{Definition}

\DeclareMathOperator{\Div}{div}
\newcommand{\abs}[1]{\lvert#1\rvert}







\newcommand{\SpDim}{N}
\newcommand{\numberspacefont}{\boldsymbol}
\newcommand{\R}{\numberspacefont{R}}
\newcommand{\RN}{\R^{\SpDim}}

\newcommand{\Pposbase}[3][*]{\ifthenelse{\equal{#1}{*}}%
{(#2)_{#3}}{\left(#2\right)_{#3}}}


\newcommand{\di}{\,\textup{\textmd{d}}}
\newcommand{\eps}{\varepsilon}

\newcommand{\pder}[2]{\frac{\partial #1}{\partial #2}}

\newcommand{\grad}{\operatorname{\nabla}}
\DeclareMathOperator{\supp}{supp}
\newcommand{\der}[2]{\frac{\di #1}{\di #2}}

\newcommand{\ew}{f}
\newcommand{\exw}{g}

\newcommand{\phj}{\varphi}

\newcommand{\auxf}{\phj}

\newcommand{\excf}{G}

\newcommand{\rpol}{r}
\newcommand{\rpolf}{\hat\rpol}
\newcommand{\rw}{\rho}
\newcommand{\flopr}{\hat s}
\newcommand{\nonl}{F}
\newcommand{\rpolfb}{\tilde\rpol}
\newcommand{\nonlb}{h}
\newcommand{\Hf}{\tilde\varphi}

\newcommand{\ups}{\widetilde u}
\newcommand{\rat}{J}
\newcommand{\tar}{E}
\newcommand{\brt}{A}
\newcommand{\trb}{B}
\newcommand{\ram}{I}

\begin{document}
\title
[Equations with weights]{A supersolution approach to doubly degenerate
  parabolic equations with weights}%
\author{Daniele Andreucci}
\address{Department of Basic and Applied Sciences for Engineering\\Sapienza University of Rome\\via A. Scarpa 16 00161 Rome, Italy}
\email{daniele.andreucci@uniroma1.it}
\thanks{The first author is member of the Gruppo Nazionale
  per la Fisica Matematica (GNFM) of the Istituto Nazionale di Alta Matematica
  (INdAM). The first author thanks the PRIN 2022 project
``Mathematical Modelling of Heterogeneous Systems (MMHS)'',
financed by the European Union - Next Generation EU,
CUP B53D23009360006, Project Code 2022MKB7MM, PNRR M4.C2.1.1.}
\author{Anatoli F. Tedeev}
\address{Southern Mathematical Institute of VSC RAS\\53 Vatutina St. Vladikavkaz 362025, Russian Federation}
\email{a\_tedeev@yahoo.com}
\thanks{The second author carried out his work at the North-Caucasus Centre of Mathematical Research of the Vladikavkaz Scientific Centre of the Russian Academy of Sciences, agreement 075-02-2025-1633.}
\thanks{Keywords: Doubly degenerate parabolic equation, exponentially growing weights, weighted Sobolev inequality, finite speed of propagation, time decay estimates.\\AMS Subject classification: 35K55, 35K65, 35B40.}

\date{2025-12-23}

\begin{abstract}
  We consider the Cauchy problem in the Euclidean space for a doubly
  degenerate parabolic equation with a space-dependent exponential
  weight, where the exponent satisfies the doubling condition. In
  particular, both the so called logconvex and logconcave cases may be
  considered. Under the additional natural assumptions we construct
  supersolutions and subsolutions allowing us to control the precise
  sharp temporal decay bounds. We apply our results also to an
  equation with inhomogeneous density, via a suitable variable
  transformation.
\end{abstract}

\maketitle

\section{Introduction}
\label{s:intro}

We look at the Cauchy problem for the doubly degenerate weighted parabolic
equation
\begin{alignat}{2}
  \label{eq:pde}
  \ew(x)
  \pder{u}{t}
  -
  \Div\big(
  \ew(x)
  u^{m-1}
  \abs{\grad u}^{p-2}
  \grad u
  \big)
  &=
    0
    \,,
  &\qquad&
           \text{in $S_{T}$,}
  \\
  \label{eq:initd}
  u(x,0)
  &=
    u_{0}(x)
    \,,
  &\qquad&
           x\in\RN
           \,.
\end{alignat}
Here $S_{T}=\RN\times (0,T)$, $0<T\le +\infty$, $x=(x_{1},\dots,x_{N})$, $\grad u$ [respectively, $\Div$] is the gradient [respectively, the divergence] with respect to $x$, and we denote
\begin{equation}
  \label{eq:expweight}
  \ew(x)
  =
  e^{\exw(\abs{x})}
  \,,
  \qquad
  x\in\RN
  \,.
\end{equation}
We assume in this paper, even without further reference, that $u\ge 0$, $u_{0}\ge 0$, $u_{0}\in L^{\infty}(\RN)$ is compactly supported, and that
\begin{equation}
  \label{eq:degen}
  1<p<N
  \,,
  \qquad
  p+m-3>0
  \,.
\end{equation}
On $\exw$ we assume that $\exw\in C([0,+\infty])\cap C^{1}((0,+\infty))$ is such that $\exw(0)=0$, $\exw(s)>0$ for $s>0$, and for given $0<\alpha_{1}\le \alpha_{2}<p$
\begin{equation}
  \label{eq:powerlike}
  \alpha_{1}
  \frac{\exw(s)}{s}
  \le
  \exw'(s)
  \le
  \alpha_{2}
  \frac{\exw(s)}{s}
  \,,
  \qquad
  s>0
  \,.
\end{equation}

In \cite{Andreucci:Tedeev:2025} we obtained sup bounds for solutions
to \eqref{eq:pde}--\eqref{eq:initd} in the range $\alpha_{2}<1$, and
estimates of the support for all $\alpha_{2}<Np/(p-1)$. The approach
of \cite{Andreucci:Tedeev:2025} relies on suitable embedding
inequalities and iterative estimates, and is valid for radial or
compactly supported solutions; see also in this connection
\cite{Andreucci:Tedeev:2015} and \cite{Andreucci:Tedeev:2022b}. Here
we present instead new results both for the decay of the solutions in
the $L^{\infty}$ norm and for the finite speed of propagation, by
means of comparison with explicit supersolutions and subsolutions, in
the range $\alpha_{1}$, $\alpha_{2}\in (0,p)$. Our approach is
inspired by the paper \cite{Grillo:Muratori:Vazquez:2017}, where the
authors considered the porous media equation on manifolds. We treat
here the case of the doubly nonlinear equation, and, as a further
novelty even in the case of the porous media equation, we deal
with the case of non-power nonlinearities (satisfying
\eqref{eq:powerlike}). For example the Zygmund function
\begin{equation*}
  \exw(s)
  =
  s^{\alpha}
  [\log(s+c)]^{\beta}
  \,,
  \quad
  s\ge 0
  \,,
  \alpha>0
  \,,
  \beta>0
  \,,
  c>1
  \,,
\end{equation*}
falls in this class with $\alpha_{1}=\alpha$, $\alpha_{2}=\alpha+\beta$.
\\
We show here that the asymptotic rates found in \cite{Andreucci:Tedeev:2025} are indeed optimal. These bounds are given by
\begin{equation}
  \label{eq:sup_decay}
  u(x,t)
  \le
  \gamma
  \Big[
  \frac{\exw^{(-1)}(\log t)^{p}}{t\log t}
  \Big]^{\frac{1}{p+m-3}}
  \,,
  \quad
  x\in\RN
  \,,
  t\gg 1
  \,,
\end{equation}
and
\begin{equation}
  \label{eq:fsp}
  \bar R(t)
  \le
  \gamma
  \exw^{(-1)}(\log t)
  \,,
  \quad
  t\gg 1
  \,,
\end{equation}
where, for solutions whose support is bounded for $t\ge 0$ we define
\begin{equation}
  \label{eq:main_R}
  \bar R(t)
  =
  \inf\{
  \rpol>0
  \mid
  \supp u(t) \subset B_{\rpol}(0)
  \}
  \,.
\end{equation}
Here $\gamma>0$ is a constant depending on $u_{0}$.
According to our present results, the estimates in \eqref{eq:sup_decay}, \eqref{eq:fsp} are indeed sharp, at least for $\alpha_{2}<p$, and, even more, they are extended to such a range for bounded and compactly supported initial data. In addition the comparison results obtained here yield more precise pointwise estimates, tracking the dependence on $\abs{x}$, in comparison to the bounds for the $L^{\infty}(\RN)$ obtained in the quoted papers of the present authors. This is an obvious effect of the barrier function method, which however is in principle less general than the approach through integral inequalities. 

As in \cite{Grillo:Muratori:Vazquez:2017}, we may employ our barrier functions to investigate an equation with inhomogeneous density, i.e.,
\begin{equation}
  \label{eq:rad_N}
  \rw(\abs{x})
  \pder{v}{t}
  =
  \Div(
  v^{m-1}
  \abs{\grad v}^{p-2}
  \grad v
  )
  \,,
  \qquad
  x\in\R^{N}
  \,,
  t>0
  \,.
\end{equation}
For the concept of solutions of \eqref{eq:rad_N}, and other related
information, we refer the reader to \cite{Andreucci:Tedeev:2021b}. In that paper, in fact in a more general setting, we considered equations modeled after \eqref{eq:rad_N}, with, roughly speaking, $\rw(s)$ behaving like $s^{-\alpha}$, $\alpha<p$ for large $s>0$. Here we deal with the new case \eqref{eq:rad_rw_est}, in which essentially $\rw(s)$ behaves like the power $s^{-p}$ multiplied by a powerlike function of $\log s$; this factor may be increasing or decreasing in $s$ according to the values of $\alpha_{1}$, $\alpha_{2}$. We refer to Remark~\ref{r:rad_N} for further comments on this point.

The Cauchy problem for nonlinear, and linear, parabolic equations
involving coefficients strongly depending on the space variable has
received great attention in the literature. We quote
\cite{Grigoryan:2006} which analyzes stochastical completeness and
other qualitative properties of weighted manifolds. In
\cite{Grillo:Muratori:2016} the authors study the smoothness and
temporal decay estimates of the solution to the Cauchy problem for the
porous media equation on Cartan-Hadamard manifolds supporting
Poincar\'{e} inequality. See also \cite{Grillo:Muratori:Vazquez:2017}
for precise two-sides space-time estimates of the solution to the  same problem,
in the setting of
various classes of Cartan-Hadamard manifolds, 
\cite{Muratori:2021} for a survey of results, and again
\cite{Muratori:Roncoroni:2022}
for new Sobolev type inequalities as well as sharp bounds of solutions.
We also quote \cite{Tedeev:2007}, dealing with the influence of an inhomogeneous density on
the interface blow up phenomenon, on finite speed of
propagation, and in general on the 
behavior of  the solution to the Cauchy problem for doubly degenerate
parabolic equations. 
Finally
\cite{Tedeev:2025} investigates the blow up of the solution itself in the same setting.

\subsection{Main results}
\label{s:main}

We define here the spaces $L^{p}_{\ew}(\RN)$ in a standard way, as the $L^{p}$ spaces relative to the measure $\ew(x)\di x$.

\begin{definition}
  \label{d:weaksol}
  A function $u\ge 0$ is a weak solution to \eqref{eq:pde} if
  \begin{equation*}
    u\in C([0,+\infty); L^{1}_{\ew}(\RN)) \cap L^{\infty}(\RN\times(\bar t,+\infty))
  \end{equation*}
  for all $\bar t>0$, and
  \begin{equation*}
    \grad u^{\frac{p+m-2}{p-1}}
    \in
    L^{p}_{\textup{loc}}
    (S_{\infty})
    \,.
  \end{equation*}
  In addition we require the standard integral formulation, i.e., for all $\eta\in C_{0}^{\infty}(S_{\infty})$,
  \begin{equation}
    \label{eq:weaksol_a}
    \int_{0}^{+\infty}
    \int_{\RN}
    \{
    -
    u
    \eta_{t}
    +
    u^{m-1}
    \abs{\grad u}^{p-2}
    \grad u
    \grad \eta
    \}
    \ew(x)
    \di x
    \di t
    =
    0
    \,.
  \end{equation}
  \\
  If we also have $u_{t}\in L^{\infty}_{\textup{loc}}(0,+\infty); L^{1}_{f\,\textup{loc}}(\RN))$, then $u$ is a strong solution.
  \\
  Supersolutions [respectively, subsolutions] to \eqref{eq:pde} are defined similarly, with the difference that, for $\eta\ge0$, we require in \eqref{eq:weaksol_a} an inequality sign $\ge $ [respectively, $\le$].
  \\
  Solutions to the Cauchy problem \eqref{eq:pde}--\eqref{eq:initd} are solutions to \eqref{eq:pde} satisfying $u(0)=u_{0}$ in the sense of $C([0,+\infty); L^{1}_{\ew}(\RN))$.
\end{definition}

Existence and comparison results for strong solutions to the problems which we consider here can be proved, at least for sufficiently regular initial data, according for example to the methods of \cite{Degtyarev:Tedeev:2012}, \cite{Tsutsumi:1988}; here we focus rather on estimates of such solutions.

When we consider radial solutions of the form $u(x,t)=U(\rpol,t)$,
$\rpol=\abs{x}$, the equation \eqref{eq:pde} can be written as
\begin{equation}
  \label{eq:pde_rad}
  \rpol^{N-1}
  \ew(\rpol)
  \pder{U}{t}
  =
  \pder{}{\rpol}
  \big[
  \rpol^{N-1}
  \ew(\rpol)
  U^{m-1}
  \abs{U_{\rpol}}^{p-2}
  U_{\rpol}
  \big]
  \,,
  \qquad
  \rpol>0
  \,,
  t>0
  \,,
\end{equation}
which in turn can be rephrased as
\begin{equation}
  \label{eq:sss_rad}
  \begin{split}
    \pder{U}{t}
    &=
      \pder{}{\rpol}
      \big[
      U^{m-1}
      \abs{U_{\rpol}}^{p-2}
      U_{\rpol}
      \big]
    \\
    &\quad+
      \Big(
      \frac{N-1}{\rpol}
      +
      \exw'
      \Big)
      \big[
      U^{m-1}
      \abs{U_{\rpol}}^{p-2}
      U_{\rpol}
      \big]
      \,,
      \qquad
      \rpol>0
      \,,
      t>0
      \,.
  \end{split}
\end{equation}
Of course the differential inequalities characterizing radial supersolutions and subsolutions can be rephrased similarly.
Specifically, we consider supersolutions and subsolutions of the form
\begin{alignat}2
  \label{eq:sss_sup}
  \ups(x,t)
  &=
    \frac{C_{*}}{(t+t_{0})^{\frac{1}{p+m-3}}}
    \Big[
    \tar(\tau)^{\frac{1}{p-1}}
    -
    \rat(\rpol)^{\frac{1}{p-1}}
    \Big]^{\frac{p-1}{p+m-3}}_{+}
    \,,
    &\quad&
            \rpol\ge \rpol_{0}
            \,,
  \\
  \label{eq:sss_ups_2}
  \ups(x,t)
  &=
    \frac{C_{*}}{(t+t_{0})^{\frac{1}{p+m-3}}}
    \Big[
    \tar(\tau)^{\frac{1}{p-1}}
    -
    \ram(\rpol)
    \Big]^{\frac{p-1}{p+m-3}}_{+}
    \,,
    &\quad&
            \rpol<\rpol_{0}
            \,,
\end{alignat}
where $\rpol=\abs{x}$, and
\begin{gather*}
  \excf(\rpol)
  =
  \int_{0}^{\rpol}
  \frac{\exw(s)}{s}
  \di s
  \,,
  \quad
  \rpol\ge 0
  \,;
  \qquad
  \rat(\rpol)
  =
  \frac{\rpol^{p}}{\excf(\rpol)}
  \,,
  \quad
  \rpol>0
  \,.
  \\
  \tau
  =
  \varGamma
  \log(t+t_{0})
  \,,
  \quad
  t\ge 0
  \,;
  \\
  \tar(\tau)
  =
  \rat(\excf^{(-1)}(\tau))
  =
  \frac{\excf^{(-1)}(\tau)^{p}}{\tau}
  \,,
  \quad
  \tau>0
  \,;
  \\
  \ram(\rpol)
  =
  \nu_{0}
  \Big(
  \frac{\rpol^{p}}{\excf(\rpol_{0})}
  \Big)^{\frac{1}{p-1}}
  +
  (1-\nu_{0})
  \rat(\rpol_{0})^{\frac{1}{p-1}}
  \,,
  \quad
  \rpol>0
  \,.
\end{gather*}
Note that the integral defining $\excf$ converges according to \eqref{eq:powerlike_lt}.

Here $C_{*}$, $\varGamma$, $t_{0}> 1$, $\rpol_0$ and $\nu_{0}$ are positive constants to be chosen. In particular, $\nu_{0}\in(0,1)$ will be selected so that $\ups$ and $\ups_{\rpol}$ are continuous even at $\rpol=\rpol_{0}$; in fact, clearly, the continuity of $\ups$ holds true for any $\nu_{0}\in(0,1)$.

We also remark that the radius $\tilde \rpol(t)$ of the support of $\ups(t)$ is characterized by $\tar(\tau)=\rat(\tilde\rpol(t))$, i.e., by $\tilde\rpol(t)=\excf^{(-1)}(\tau)$; according to Lemma~\ref{l:sss_excf} and \eqref{eq:sss_powerlike_gt_excf} this yields
\begin{equation}
  \label{eq:sup_sub_radius}
  \gamma_{1}
  \exw^{(-1)}(\log t)
  \le
  \tilde \rpol(t)
  \le
  \gamma_{2}
  \exw^{(-1)}(\log t)
  \,,
  \qquad
  t>t_{0}
  \,,
\end{equation}
for two suitable constants $\gamma_{1}$, $\gamma_{2}>0$.

\begin{proposition}
  \label{p:main_sup_sub}
  The constants $C_{*}$, $\varGamma$, $t_{0}> 1$, $\rpol_0$ and
  $\nu_{0}$ can be selected so that $\ups$ as defined in
  \eqref{eq:sss_sup}--\eqref{eq:sss_ups_2} is a strong supersolution to
  \eqref{eq:pde}. Alternatively, we may select such constants so that
  $\ups$ is a strong subsolution to \eqref{eq:pde}.
\end{proposition}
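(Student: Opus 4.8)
The proof is a direct barrier verification. Being radial, $\ups$ is a weak — in fact strong — supersolution (resp.\ subsolution) of \eqref{eq:pde} provided it satisfies the one‑dimensional inequality attached to \eqref{eq:sss_rad},
\[
  \partial_t\ups\ \ge\ \partial_\rpol\!\bigl[\ups^{m-1}\abs{\ups_\rpol}^{p-2}\ups_\rpol\bigr]
  +\Bigl(\frac{N-1}{\rpol}+\exw'\Bigr)\ups^{m-1}\abs{\ups_\rpol}^{p-2}\ups_\rpol
\]
(resp.\ with $\le$) in the open set $\{\ups>0\}$, together with: (i) $C^{1}$ continuity of $\ups$ across $\rpol=\rpol_0$, which removes any surface measure on that sphere; and (ii) continuity of $\ups$ and of the flux $\ups^{m-1}\abs{\ups_\rpol}^{p-2}\ups_\rpol$ at the free boundary $\rpol=\tilde\rpol(t)=\excf^{(-1)}(\tau)$. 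Writing $\beta=(p-1)/(p+m-3)>0$ and $\Phi=\tar(\tau)^{1/(p-1)}-\rat(\rpol)^{1/(p-1)}$, both $\ups$ and the flux vanish at $\rpol=\tilde\rpol(t)$ like $\Phi^{\beta}$, so (ii) holds and no interface contribution appears, while $\ups_t$ blows up there at most like $\Phi^{\beta-1}$ with $\beta>0$, so it is still locally integrable and $\ups$ is strong. For (i): integrating \eqref{eq:powerlike} from $0$ gives $\alpha_1\le\exw(\rpol)/\excf(\rpol)\le\alpha_2<p$, whence $\rat'(\rpol)=\rpol^{p-1}\bigl(p-\exw(\rpol)/\excf(\rpol)\bigr)\excf(\rpol)^{-1}>0$, so $\ups$ is decreasing in $\rpol$ and the flux negative; imposing continuity of $\ups_\rpol$ at $\rpol_0$ forces $\nu_0=1-\exw(\rpol_0)/\bigl(p\,\excf(\rpol_0)\bigr)\in(1-\alpha_2/p,\,1-\alpha_1/p)\subset(0,1)$, and $\ups$ itself is continuous there for every $\nu_0$ since $\ram(\rpol_0)=\rat(\rpol_0)^{1/(p-1)}$.

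Next, substituting the ansatz into the operator one checks that every term — $\partial_t\ups$, the $\rpol$‑derivative of the flux, and the lower order term — carries the common factor $C_*(t+t_0)^{-1/(p+m-3)-1}$; dividing by it, and then by $\Phi^{\beta-1}>0$, the super/subsolution condition in the outer region $\rpol_0<\rpol<\excf^{(-1)}(\tau)$ becomes
\[
  \mathcal C_1(\rpol,\tau)+\mathcal C_2(\rpol,\tau)\,\Phi\ \ge\ 0\qquad(\text{resp.\ }\le0),\qquad\tau=\varGamma\log(t+t_0),
\]
with $\mathcal C_1=\tfrac{\beta}{p-1}\bigl[\varGamma\,\tar^{\frac1{p-1}-1}\tar'-c_0C_*^{p+m-3}\,\rat^{\frac1{p-1}-1}\rat'\,h\bigr]$, $\mathcal C_2=-\tfrac1{p+m-3}+c_0C_*^{p+m-3}\bigl[h'+(\tfrac{N-1}{\rpol}+\exw')h\bigr]$, $h=\rat^{2-p}(\rat')^{p-1}$, $c_0=(\beta/(p-1))^{p-1}$. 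Using \eqref{eq:powerlike} and Lemma~\ref{l:sss_excf} one records the elementary facts that run the argument: $\rat^{\frac1{p-1}-1}\rat'\,h=\bigl(p-\exw(\rpol)/\excf(\rpol)\bigr)^{p}\excf(\rpol)^{-1}\rat(\rpol)^{1/(p-1)}$; $\tar^{\frac1{p-1}-1}\tar'$ equals $\tar(\tau)^{1/(p-1)}\tau^{-1}$ times a factor lying between two positive constants; $\tau\,\excf(\rpol)^{-1}\rat(\rpol)^{1/(p-1)}\tar(\tau)^{-1/(p-1)}=\bigl(\tfrac{\rpol/\excf(\rpol)}{R/\excf(R)}\bigr)^{p/(p-1)}$ with $R=\excf^{(-1)}(\tau)$; the map $\rpol\mapsto\rpol/\excf(\rpol)$ is monotone, increasing or decreasing according as $\exw/\excf$ is $<1$ or $>1$; and on $[\rpol_0,\infty)$ the quantity $\exw'h$ lies between two positive constants (positivity using $\alpha_2<p$), whereas $h'$ and $(N-1)h/\rpol$ are $O\!\bigl(\excf(\rpol_0)^{-1}\bigr)$ (positivity of $(N-1)/\rpol$ using $p<N$).

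We now choose the constants, with ``large'' and ``small'' interchanged between the two cases; in all cases $t_0>1$ is taken large enough that $\excf^{(-1)}(\varGamma\log t_0)>\rpol_0$. For the supersolution: first take $\rpol_0$ so large that $h'+(\tfrac{N-1}{\rpol}+\exw')h\ge\tfrac12\exw'h>0$ on $[\rpol_0,\infty)$; then take $C_*$ so large that $\mathcal C_2\ge\tfrac12c_0C_*^{p+m-3}\exw'h>0$ everywhere. The condition $\mathcal C_1\ge0$ then reads $\varGamma\ge(\text{const})\,C_*^{p+m-3}\,G(\rpol,\tau)$, where $G$ is built from the quantities just listed; by $\rpol<R$ and the monotonicity of $\rpol/\excf(\rpol)$, $G$ is bounded by a fixed constant on the part of the outer region where $\Phi$ is small (there $\rpol$ is comparable to $R$, hence $\rpol/\excf(\rpol)$ comparable to $R/\excf(R)$). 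Choosing $\varGamma$ large relative to $C_*$ gives $\mathcal C_1\ge0$ hence the inequality there; on the complementary part $\Phi$ is bounded below by a fixed multiple of $\rat^{\frac1{p-1}-1}\rat'\,h$, which — up to fixed constants and the very same power $C_*^{p+m-3}$ — controls $\abs{\mathcal C_1}$ from above, so $\mathcal C_2\Phi\ge\abs{\mathcal C_1}$ and the inequality holds again; the powers of $C_*$ on the two sides matching is what makes this compensation uniform in $\varGamma$. The subsolution is the mirror image: $\rpol_0$ and $C_*$ small, so that $\mathcal C_2\le-\tfrac1{2(p+m-3)}<0$ everywhere (recall $h'+(\tfrac{N-1}{\rpol}+\exw')h$ is also bounded above on $[\rpol_0,\infty)$), and $\varGamma$ small relative to $C_*$. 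The inner region $\rpol<\rpol_0$ is handled in the same way and is easier: there $\ram(\rpol)$ is affine in $\rpol^{p/(p-1)}$, so the profile is a genuine Barenblatt‑type profile, the singular coefficient $(N-1)/\rpol$ is harmless because the flux vanishes like $\rpol$ at the origin, $\exw'$ is small near $0$, and the same $C_*$, $\varGamma$ close the estimate.

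The main obstacle is exactly this uniform pointwise inequality in the outer region: one must control two competing powers of $\Phi$ along the whole range $\rpol_0<\rpol<\excf^{(-1)}(\tau)$ and for all large $\tau$, the dominant balance switching between a neighbourhood of the free boundary and its complement, and depending on whether $\exw/\excf$ lies below or above $1$ (the logconvex versus logconcave alternative); since $\exw$ is power‑like only in the two‑sided sense of \eqref{eq:powerlike}, every intermediate quantity can only be sandwiched through that condition, so the whole scheme must be arranged so that the powers of $C_*$ (and of $\varGamma$) on the two sides of each inequality match and no quantity that degenerates as $\tau\to\infty$ is left uncompensated.
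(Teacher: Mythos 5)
Your setup, algebra, and bookkeeping all match the paper's computation up to a harmless rescaling by $p+m-3$: the choice of $\nu_{0}$ in \eqref{eq:sss_ups_c1}, the common factor extracted before \eqref{eq:sss_ups_ineq2}, and the identification of the $\Phi$-free and $\Phi$-proportional coefficients are exactly what the paper writes as $K_{0}$, $\brt$, $K_{1}$, $h_{1}$, $h_{2}$. Where you part ways with the paper is the step that closes the outer-region estimate. The paper rearranges into \eqref{eq:sss_ups_ineq5}, which is of the form $\auxf(\excf^{(-1)}(\tau))\ge\auxf(\rpol)$ after the choice \eqref{eq:sss_ups_Gamma0}, and then invokes Lemma~\ref{l:sss_comb} to conclude that $\auxf(\rpol)=\rat(\rpol)^{1/(p-1)}\bigl(\delta_1+\delta_2/\excf(\rpol)\bigr)$ is increasing past a computable $\rpol_{1}$; this disposes of the whole interval $\rpol_0\le\rpol\le\excf^{(-1)}(\tau)$ in a single monotonicity argument. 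You instead split the outer region according to whether $\Phi$ is small (near the interface, where you argue $\mathcal C_1\ge0$ by enlarging $\varGamma$) or not small (where you argue $\mathcal C_2\Phi$ absorbs $-\mathcal C_1$ by enlarging $\rpol_0$). Both routes close the same estimate; the paper's is cleaner because it avoids the dichotomy and the implicit dependence of every constant on the threshold $\theta$ separating ``small'' from ``not small''.

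Two points in your writeup are not quite right as stated. First, the claim that $\rpol\mapsto\rpol/\excf(\rpol)$ is monotone is false when $\alpha_{1}<1<\alpha_{2}$: its derivative has the sign of $1-\exw(\rpol)/\excf(\rpol)$, and Lemma~\ref{l:sss_excf} only sandwiches $\exw/\excf$ between $\alpha_1$ and $\alpha_2$, which straddle $1$ in that case. The boundedness of your $G$ on the ``small $\Phi$'' part can be salvaged without monotonicity — from $\rat(R)/\rat(\rpol)\ge(R/\rpol)^{p-\alpha_2}$ one gets $R/\rpol$ bounded there, and then $\excf(R)/\excf(\rpol)\le(R/\rpol)^{\alpha_2}$ bounds $G$ — but not by the argument you wrote. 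Second, the inner region $\rpol<\rpol_{0}$ is dismissed as ``easier,'' but it contributes a genuine constraint: the paper needs \eqref{eq:sss_ups_ineqs5}, a lower bound on $\tar(\varGamma\log t_0)^{1/(p-1)}$ in terms of $\rpol_0$, to guarantee the bracket in \eqref{eq:sss_ups_ineqs4} stays nonpositive for all $\rpol<\rpol_0$; your blanket requirement $\excf^{(-1)}(\varGamma\log t_0)>\rpol_0$ is weaker and does not by itself close that estimate. Both omissions are repairable, but as written the proof is incomplete on those points.
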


\begin{theorem}
  \label{t:sup_sup}
  Assume \eqref{eq:degen} and \eqref{eq:powerlike}. Let $u$ be a strong solution to \eqref{eq:pde}--\eqref{eq:initd} with a compactly supported and bounded data $u_{0}$.
  \\
  Then there exist constants $t_{0}$, $C_{*}$, $\rpol_{0}$ and $\varGamma$ such that $\ups$ as in \eqref{eq:sss_sup}--\eqref{eq:sss_ups_2} is a supersolution and
  \begin{equation}
    \label{eq:sup_sup_n}
    u(x,t)
    \le
    \ups(x,t)
    \,,
    \qquad
    (x,t)\in S_{\infty}
    \,.
  \end{equation}
  We have also
  \begin{equation}
    \label{eq:sup_sup_nn}
    \supp u(t)
    \subset
    B_{R(t)}(0)
    \,,
    \quad
    R(t)
    =
    \gamma
    \exw^{(-1)}(\log t)
    \,,
    \qquad
    t>t_{0}
    \,,
  \end{equation}
  for a suitable constant $\gamma>0$. The constants here depend on the parameters appearing in the assumptions and on $\sup_{\RN} u_{0}$, $R(0)$.
\end{theorem}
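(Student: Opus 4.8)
The plan is a comparison argument: Proposition~\ref{p:main_sup_sub} already provides, for a suitable choice of $C_{*},\varGamma,t_{0},\rpol_{0},\nu_{0}$, a strong supersolution $\ups$ of the shape \eqref{eq:sss_sup}--\eqref{eq:sss_ups_2}, so the task is to tune those constants so that moreover $\ups(\cdot,0)\ge u_{0}$, and then to invoke the comparison principle. First I would record the monotonicity that reduces the initial ordering to two scalar conditions. Write $M=\sup_{\RN}u_{0}$ and fix $R_{0}>0$ with $\supp u_{0}\subset B_{R_{0}}(0)$. For $0<s\le\rpol$, \eqref{eq:powerlike} gives $(\log\exw)'(s)\le\alpha_{2}/s$, whence $\exw(s)\ge\exw(\rpol)(s/\rpol)^{\alpha_{2}}$ and $\excf(\rpol)\ge\exw(\rpol)/\alpha_{2}$, so that
\[
  \der{\log\rat(\rpol)}{\rpol}
  =\frac{p}{\rpol}-\frac{\excf'(\rpol)}{\excf(\rpol)}
  \ge\frac{p-\alpha_{2}}{\rpol}>0\,.
\]
Since $\ram$ is increasing as well, $\ups(\cdot,t)$ is radially nonincreasing with $\supp\ups(\cdot,t)=\bar B_{\tilde\rpol(t)}$, $\tilde\rpol(t)=\excf^{(-1)}(\varGamma\log(t+t_{0}))$; hence $u_{0}\le\ups(\cdot,0)$ will follow once $\excf^{(-1)}(\varGamma\log t_{0})>R_{0}$ and $\ups(R_{0},0)\ge M$ (if $R_{0}<\rpol_{0}$, replace $R_{0}$ by $\rpol_{0}$ in the latter and use \eqref{eq:sss_ups_2}).

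Next I would fix the constants. Keep $\rpol_{0}$ and $\nu_{0}$ as in Proposition~\ref{p:main_sup_sub}; these depend only on $N,p,m,\alpha_{1},\alpha_{2}$ and only serve to glue the inner and outer profiles in a $C^{1}$ fashion at $\rpol_{0}$. Writing $\ups=C_{*}(t+t_{0})^{-1/(p+m-3)}W(\rpol,\varGamma\log(t+t_{0}))$ and dividing the differential inequality \eqref{eq:sss_rad} for $\ups$ by the common factor $\ew(x)(t+t_{0})^{-1-1/(p+m-3)}$, the supersolution condition becomes an inequality in which a factor $C_{*}^{p+m-3}$ multiplies the diffusion terms; it is therefore satisfied as soon as $\varGamma$ is large in terms of $C_{*}$ --- say $\varGamma\ge\const\,C_{*}^{p+m-3}$, with the precise threshold coming from the proof of Proposition~\ref{p:main_sup_sub}. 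I would then set $\varGamma=\const\,C_{*}^{p+m-3}$ and $t_{0}=2\,e^{\excf(R_{0})/\varGamma}>1$, which makes the first scalar condition hold, while \eqref{eq:sss_sup} gives
\[
  \ups(R_{0},0)=\frac{C_{*}}{t_{0}^{1/(p+m-3)}}
  \Big[\rat\big(\excf^{(-1)}(\excf(R_{0})+\varGamma\log2)\big)^{\frac1{p-1}}-\rat(R_{0})^{\frac1{p-1}}\Big]^{\frac{p-1}{p+m-3}}.
\]
As $C_{*}\to\infty$ one has $\varGamma\to\infty$, so $\excf^{(-1)}(\excf(R_{0})+\varGamma\log2)\to\infty$, the bracket diverges and $t_{0}\to2$; hence $\ups(R_{0},0)\to\infty$, and the second scalar condition is met by taking $C_{*}$ large depending on $M$ and $R_{0}$.

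With these constants $\ups$ is a strong supersolution of \eqref{eq:pde} with $u_{0}\le\ups(\cdot,0)$, so the comparison principle for strong solutions (available by the methods of \cite{Degtyarev:Tedeev:2012} and \cite{Tsutsumi:1988}; alternatively one compares on $B_{\rpol}(0)\times(0,T)$ --- where, for $\rpol$ large, $\ups$ vanishes on the lateral boundary and so does $u$, by a crude classical finite-speed-of-propagation bound --- and lets $\rpol,T\to\infty$) yields $u\le\ups$ on $S_{\infty}$, which is \eqref{eq:sup_sup_n}. Since $\ups(x,t)=0$ for $\abs{x}\ge\tilde\rpol(t)$, this forces $\supp u(\cdot,t)\subset\bar B_{\tilde\rpol(t)}$, and the upper bound in \eqref{eq:sup_sub_radius} gives $\tilde\rpol(t)\le\gamma_{2}\exw^{(-1)}(\log t)$ for $t>t_{0}$, i.e.\ \eqref{eq:sup_sup_nn} with $\gamma=\gamma_{2}$; the dependence of all constants on $\sup_{\RN}u_{0}$ and $R_{0}=R(0)$ is as recorded above.

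The step I expect to be the main obstacle is the choice of constants: one must verify that enlarging $C_{*}$ enough to dominate $\sup_{\RN}u_{0}$ remains compatible with the coupling between $C_{*}$, $\varGamma$ and $t_{0}$ imposed by the supersolution computation of Proposition~\ref{p:main_sup_sub}. The displayed expression for $\ups(R_{0},0)$ suggests the coupling works in our favour, but turning this into a proof requires the exact form of that coupling; the monotonicity argument and the domain exhaustion in the comparison step are routine.
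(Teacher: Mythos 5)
Your overall strategy---comparison against a tuned supersolution of Proposition~\ref{p:main_sup_sub}, reduced via radial monotonicity of $\ups$ to the two scalar requirements that $\supp\ups(\cdot,0)\supset B_{R_{0}}$ and $\ups(R_{0},0)\ge M$, then Tsutsumi-style domain exhaustion---matches the paper, which packages the initial-data ordering precisely into Lemma~\ref{l:sss_comp}. The monotonicity computation, the reading-off of \eqref{eq:sup_sup_nn} from the upper bound in \eqref{eq:sup_sub_radius}, and the appeal to the comparison principle are all fine.

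The genuine gap is in your calibration of the constants, exactly at the point you flagged as uncertain. You set $\varGamma=\const\,C_{*}^{p+m-3}$ (the minimal value allowed by \eqref{eq:sss_ups_Gamma0}) and $t_{0}=2\,e^{\excf(R_{0})/\varGamma}$, then send $C_{*}\to\infty$ so that $\varGamma\to\infty$ and $t_{0}\to 2$. But the supersolution construction imposes, independently of $\varGamma$, a lower bound on $t_{0}$ coming from \eqref{eq:sss_ups_ineqs5} (and \eqref{eq:sss_ups_t0r0}): explicitly one needs $\tar(\log t_{0})^{1/(p-1)}\ge\bigl(1+\tfrac{p\nu_{0}}{p+m-3}\bigr)\rat(\rpol_{0})^{1/(p-1)}$, i.e.\ $\log t_{0}$ must exceed a positive threshold determined by $\rpol_{0}$ alone, and this threshold need not lie below $\log 2$ (recall $\excf(\rpol_{0})$ is forced to be large by \eqref{eq:sss_ups_r0}). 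So your $t_{0}$ may cease to produce a supersolution exactly in the regime you need. The paper avoids this by exploiting that $\rpol_{0}$, $C_{*}$ and $t_{0}$ can be fixed once and for all, independently of $\varGamma$: choose $\rpol_{0}$ large enough to satisfy both \eqref{eq:sss_ups_r0} and $\rpol_{0}\ge R_{0}$, then $C_{*}$ as in \eqref{eq:sss_ups_C}, then $t_{0}$ satisfying \eqref{eq:sss_ups_ineqs5}; with these frozen, the only remaining constraint on $\varGamma$ is the lower bound \eqref{eq:sss_ups_Gamma0}, so $\varGamma$ may be taken arbitrarily large, and $\ups(x,0)\ge C_{*}t_{0}^{-1/(p+m-3)}\bigl[(\varGamma\log t_{0})^{(p-\alpha_{2})/(\alpha_{2}(p-1))}-\rat(\rpol_{0})^{1/(p-1)}\bigr]^{(p-1)/(p+m-3)}\to\infty$ on $B_{\rpol_{0}}\supset\supp u_{0}$. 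Replacing your coupling of $t_{0}$ to $\varGamma$ with this ordering closes the gap; the rest of your argument then goes through as written.
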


\begin{theorem}
  \label{t:sub_sub}
  Assume \eqref{eq:degen} and \eqref{eq:powerlike}. Let $u$ be a strong solution to \eqref{eq:pde}--\eqref{eq:initd} with a compactly supported and bounded data $u_{0}$, such that $u_{0}(x)\ge \eps>0$ for $\abs{x}\le\ell$, $\ell>0$.
  \\
  Then there exist constants $t_{0}$, $C_{*}$, $\rpol_{0}$ and $\varGamma$ such that $\ups$ as in \eqref{eq:sss_sup}--\eqref{eq:sss_ups_2} is a subsolution and
  \begin{equation}
    \label{eq:sub_sub_n}
    u(x,t)
    \ge
    \ups(x,t)
    \,,
    \qquad
    (x,t)\in S_{\infty}
    \,.
  \end{equation}
  We have also
  \begin{equation}
    \label{eq:sub_sub_nn}
    \supp u(t)
    \supset
    B_{R(t)}(0)
    \,,
    \quad
    R(t)
    =
    \gamma
    \exw^{(-1)}(\log t)
    \,,
    \qquad
    t>t_{0}
    \,,
  \end{equation}
  for a suitable constant $\gamma>0$. The constants here depend on the parameters appearing in the assumptions and on $\eps$, $\ell$.
\end{theorem}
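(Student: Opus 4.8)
The argument will mirror that of Theorem~\ref{t:sup_sup}: from Proposition~\ref{p:main_sup_sub} we will obtain a subsolution $\ups$ lying \emph{below} $u_{0}$ at $t=0$, and conclude by the comparison principle for strong solutions, available by the methods of \cite{Degtyarev:Tedeev:2012} and \cite{Tsutsumi:1988} recalled above.

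First we reduce the hypothesis: since $u_{0}\ge\eps$ on $B_{\ell}$ trivially implies $u_{0}\ge\eps'$ on $B_{\ell'}$ for all $0<\eps'\le\eps$, $0<\ell'\le\ell$, we may assume $\eps$ and $\ell$ as small as we wish. By Proposition~\ref{p:main_sup_sub} we fix parameters $C_{*},\varGamma,t_{0}>1,\rpol_{0},\nu_{0}$ making $\ups$ in \eqref{eq:sss_sup}--\eqref{eq:sss_ups_2} a strong subsolution of \eqref{eq:pde}, and use the remaining freedom in these parameters to enforce
\[
  \rpol_{0}<\tilde\rpol(0)=\excf^{(-1)}(\varGamma\log t_{0})\le\ell
  \,,
  \qquad
  \sup_{\RN}\ups(\cdot,0)\le\eps
  \,.
\]
The bound $\tilde\rpol(0)\le\ell$ is obtained by taking $\varGamma\log t_{0}$ small (recall $\excf^{(-1)}$ is continuous, increasing, and vanishes at $0$); $\rpol_{0}$ is then chosen below $\tilde\rpol(0)$; and the amplitude bound follows, once $\tilde\rpol(0)$ is frozen, by taking $C_{*}$ small, since $\sup_{\RN}\ups(\cdot,0)$ is attained at $\rpol=0$ and depends linearly on $C_{*}$ --- equivalently, one may premultiply the whole barrier by a small factor, i.e.\ replace $u$ by $ku(x,k^{p+m-3}t)$, which preserves the subsolution property and only rescales the amplitude. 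With these choices $\ups(x,0)\le u_{0}(x)$ for every $x\in\RN$: for $\abs{x}\le\tilde\rpol(0)\le\ell$ one has $\ups(x,0)\le\eps\le u_{0}(x)$, and for $\abs{x}>\tilde\rpol(0)$ one has $\ups(x,0)=0\le u_{0}(x)$.

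Since $u$ is a strong solution of \eqref{eq:pde}--\eqref{eq:initd} and $\ups$ a strong subsolution of \eqref{eq:pde} with $\ups(\cdot,0)\le u(\cdot,0)$, the comparison principle quoted above gives $u\ge\ups$ on $S_{\infty}$, which is \eqref{eq:sub_sub_n}. For \eqref{eq:sub_sub_nn} it remains to identify $\{\ups(\cdot,t)>0\}$ with $B_{\tilde\rpol(t)}(0)$. From \eqref{eq:powerlike} one checks that $\rat$ is strictly increasing with $\rat(0^{+})=0$; since $\rat(\excf^{(-1)}(\tau))=\tar(\tau)$, the outer piece \eqref{eq:sss_sup} is positive exactly for $\rpol_{0}\le\rpol<\tilde\rpol(t)$, while on $\rpol<\rpol_{0}$ the inner piece \eqref{eq:sss_ups_2} is positive too, because $\ram(\rpol)\le\rat(\rpol_{0})^{1/(p-1)}<\tar(\tau)^{1/(p-1)}$, the last inequality holding since $\tilde\rpol(t)\ge\tilde\rpol(0)>\rpol_{0}$. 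Hence $u\ge\ups\ge0$ forces $\supp u(t)\supset B_{\tilde\rpol(t)}(0)$, and the lower bound in \eqref{eq:sup_sub_radius} yields \eqref{eq:sub_sub_nn} with $\gamma=\gamma_{1}$.

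The delicate step is the compatibility issue in the second paragraph: one must ensure that the constraints on $C_{*},\varGamma,t_{0},\rpol_{0}$ inherited from Proposition~\ref{p:main_sup_sub} (those guaranteeing the differential inequality) still leave room to make $\tilde\rpol(0)$ and $\sup_{\RN}\ups(\cdot,0)$ small. Should the construction force, say, $\varGamma\log t_{0}$ to stay bounded away from $0$, the fallback is to start the comparison at a positive time $t_{1}$ at which a ball-wise lower bound for $u(\cdot,t_{1})$ follows from the positivity and smoothing theory of the equation, and to fit $\ups$ underneath $u(\cdot,t_{1})$ there; this is the only point that is not routine.
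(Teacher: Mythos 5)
Your proof takes essentially the same approach as the paper. The paper's argument for Theorem~\ref{t:sub_sub} (combined with Theorem~\ref{t:sup_sup} in a single proof) consists precisely of the three ingredients you identify: Proposition~\ref{p:main_sup_sub} to produce the subsolution, a comparison principle in the spirit of Tsutsumi, and --- crucially --- Lemma~\ref{l:ssb_comp}, which does exactly what your ``delicate step'' calls for: it shows that the parameters $\rpol_{0}$, $C_{*}$, $t_{0}$, $\varGamma$ \emph{can} be chosen so that $\ups$ is a subsolution, $\supp\ups(0)\subset\{\abs{x}\le\ell\}$ and $\ups(\cdot,0)\le\eps$. The paper resolves the compatibility issue by introducing a single free parameter $\lambda$ in the constraints \eqref{eq:ssb_ups_r0}--\eqref{eq:ssb_ups_C}: one first fixes $t_{0}$ by \eqref{eq:sss_ubs_t0} (this choice is $\lambda$-independent), then $\rpol_{0}$ and $C_{*}$ are made small together by shrinking $\lambda$, $\varGamma$ is slaved to $\rpol_{0}$, $t_{0}$ via \eqref{eq:ssb_ups_Gamma0}, and both the support radius $\rpol_{1}\le 2^{\alpha_{2}(p-1)/(\alpha_{1}(p-\alpha_{2}))}\rpol_{0}$ and the amplitude $\ups(0,0)\le\lambda\tar(2^{\alpha_{2}(p-1)/(p-\alpha_{2})}\lambda^{p+m-3})^{p+m-3}$ shrink to zero as $\lambda\to0$. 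So the fallback you sketch (starting at a positive time $t_{1}$ and appealing to interior positivity) is not needed.

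One remark on your alternative route: simply decreasing $C_{*}$ while freezing the other parameters is \emph{not} harmless, because $C_{*}$ enters the compatibility constraint \eqref{eq:ssb_ups_r0} from below (via $\excf(\rpol_{0})<\tilde\mu_{4}C_{*}^{p+m-3}\log t_{0}$), so a smaller $C_{*}$ may force $\rpol_{0}$ to change. Your rescaling argument, replacing $\ups$ by $k\ups(x,k^{p+m-3}t)$, is the cleaner fix: one verifies directly that this scaling preserves the subsolution property for \eqref{eq:pde}, leaves the support of the initial trace unchanged, and multiplies the amplitude by $k$. That observation, while not used in the paper, is a valid shortcut for the amplitude part of Lemma~\ref{l:ssb_comp}, though one still has to control the support radius through $\rpol_{0}$ and verify that the subsolution constraints remain satisfiable; this is the bookkeeping the paper carries out with the auxiliary parameter $\lambda$.
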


\begin{proof}[Proofs of Theorems \ref{t:sup_sup} and \ref{t:sub_sub}]
  The proof follows straightforwardly from an application of the method given in the proof of \cite[Theorem~1]{Tsutsumi:1988}, when we take into account Proposition~\ref{p:main_sup_sub} and Lemmas \ref{l:sss_comp} and \ref{l:ssb_comp}.
\end{proof}

Next we apply our results to a different kind of equation, that is \eqref{eq:rad_N}; see also \cite{Vazquez:2015}, \cite{Grillo:Muratori:Vazquez:2017} for the case of the porous media equation.

\begin{theorem}
  \label{t:transform}
Radial solutions $U(\rpol,t)$ [respectively, supersolutions and subsolutions] of \eqref{eq:pde} correspond, thru a suitable transformation of the space variable, to radial solutions $v(s,t)$ [respectively, supersolutions and subsolutions] of \eqref{eq:rad_N},
for a suitable positive function $\rw\in C([0,+\infty))$ such that $\rw(0)=1$ and
\begin{equation}
  \label{eq:rad_rw_est}
  \rw(s)
  \sim
  \frac{\exw^{(-1)}(\log s)^{p}}{(\log s)^{p}}
  \frac{1}{s^{p}}
  \,.
\end{equation}
In fact, the transformation $r=\rpolf(s)$, $s>0$, is such that
\begin{equation}
  \label{eq:rad_r_asy_n}
  \rpolf(s)
  \sim
  \exw^{(-1)}(\log s)
  \,.
\end{equation}
\end{theorem}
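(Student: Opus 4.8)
The plan is to find the change of variable $r=\rpolf(s)$ that turns the radial $p$-Laplacian-type operator with exponential weight $\ew(r)=e^{\exw(r)}$ in \eqref{eq:pde_rad} into the operator with inhomogeneous density in \eqref{eq:rad_N}, and then to read off the asymptotics of $\rw$. Write $r=\rpolf(s)$ and set $v(s,t)=U(\rpolf(s),t)$. First I would compute, via the chain rule, $U_{r}=v_{s}/\rpolf'(s)$, and substitute into the flux $r^{N-1}\ew(r)\abs{U_{r}}^{p-2}U_{r}=\rpolf^{N-1}e^{\exw(\rpolf)}\rpolf'^{\,1-p}\abs{v_{s}}^{p-2}v_{s}$. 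The requirement is that, after dividing the whole equation by a common factor, the spatial operator on the right of \eqref{eq:pde_rad} becomes $\partial_{s}\big(v^{m-1}\abs{v_{s}}^{p-2}v_{s}\big)$ up to no extra $s$-dependent coefficient outside the divergence; this forces the algebraic relation
\begin{equation}
  \label{eq:transf_ode}
  \rpolf(s)^{N-1}
  e^{\exw(\rpolf(s))}
  \rpolf'(s)^{1-p}
  =
  s^{N-1}
  \,,
\end{equation}
i.e. $\rpolf'(s)=\big(\rpolf(s)^{N-1}e^{\exw(\rpolf(s))}s^{1-N}\big)^{1/(p-1)}$, an autonomous-in-form first order ODE for $\rpolf$ with initial condition $\rpolf(0)=0$ (and, to get $\rw(0)=1$, a normalization fixing the derivative at the origin). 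Dividing \eqref{eq:pde_rad} by $s^{N-1}$ and comparing with the radial form of \eqref{eq:rad_N}, namely $\rw(s)\,s^{N-1}v_{t}=\partial_{s}\big(s^{N-1}v^{m-1}\abs{v_{s}}^{p-2}v_{s}\big)/s^{N-1}\cdot s^{N-1}$ — more precisely after the same manipulation that produced \eqref{eq:sss_rad} — one finds $\rw(s)=r^{N-1}\ew(r)\,s^{1-N}\,\rpolf'(s)=\rpolf'(s)^{p}$ by \eqref{eq:transf_ode}. Since supersolution/subsolution inequalities are preserved under a monotone smooth change of the spatial variable (the sign of the test-function pairing is unchanged), the correspondence extends verbatim to super- and subsolutions, which is what the statement claims.

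It remains to extract the asymptotics \eqref{eq:rad_r_asy_n} and \eqref{eq:rad_rw_est}. From \eqref{eq:transf_ode}, taking logarithms, $(p-1)\log\rpolf'(s)=(N-1)\log\rpolf(s)+\exw(\rpolf(s))-(N-1)\log s$. For large $s$ one expects $\rpolf(s)\to\infty$ with $\exw(\rpolf(s))$ the dominant term on the right, so heuristically $\exw(\rpolf(s))\approx (p-1)\log\rpolf'(s)+\text{lower order}$; a cleaner route is to integrate: since $r=\rpolf(s)$ is increasing, invert to $s=\rpolf^{(-1)}(r)=:\flopr(r)$, for which \eqref{eq:transf_ode} gives $\flopr'(r)=\big(s^{N-1}r^{1-N}e^{-\exw(r)}\big)^{1/(p-1)}$, i.e. $\flopr'(r)/\flopr(r)^{(N-1)/(p-1)}=r^{-(N-1)/(p-1)}e^{-\exw(r)/(p-1)}$. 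The point is that the right side decays like $e^{-\exw(r)/(p-1)}$, which by \eqref{eq:powerlike} (with $\alpha_{1}>0$) is integrable at $+\infty$; hence $\flopr(r)$ converges to a finite limit only if $(N-1)/(p-1)<1$ — otherwise $\flopr(r)^{1-(N-1)/(p-1)}$ is what converges. Using \eqref{eq:degen} ($p<N$, so $(N-1)/(p-1)>1$), $\flopr(r)^{1-(N-1)/(p-1)}\to$ const, i.e. $\flopr(r)\to\infty$; matching the leading behaviour of the integral $\int^{r} t^{-(N-1)/(p-1)}e^{-\exw(t)/(p-1)}\di t$ against $\flopr^{1-(N-1)/(p-1)}$, and using that $\exw$ is, by \eqref{eq:powerlike}, comparable to a slowly-varying multiple of a power, one gets $\log\flopr(r)\sim \exw(r)/(p-1)\cdot(\text{const})$ — after absorbing constants into $\CH$ this is exactly $\log s\sim\exw(r)$, i.e. $r=\rpolf(s)\sim\exw^{(-1)}(\log s)$, which is \eqref{eq:rad_r_asy_n}. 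Finally $\rw(s)=\rpolf'(s)^{p}$ and $\rpolf'(s)=1/\flopr'(\rpolf(s))$, and plugging $r\sim\exw^{(-1)}(\log s)$ into the expression for $\flopr'(r)$ yields, after collecting the power of $s$ and the power of $\log s$, precisely \eqref{eq:rad_rw_est}.

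The main obstacle is the asymptotic matching in the last paragraph: turning the qualitative statement ``$\exw(\rpolf(s))$ dominates'' into the two-sided estimate \eqref{eq:rad_r_asy_n} requires controlling the lower-order logarithmic terms $\log\rpolf(s)$ and $\log s$ against $\exw(\rpolf(s))$, and this in turn needs the doubling-type bounds on $\exw$ encoded in \eqref{eq:powerlike} — specifically the upper bound $\exw'(s)\le\alpha_{2}\exw(s)/s$ to guarantee $\exw$ does not grow too fast (so that $\log$-corrections are genuinely subordinate) and the lower bound to guarantee $e^{-\exw/(p-1)}$ decays fast enough to be integrable. I would handle this by invoking Lemma~\ref{l:sss_excf} together with \eqref{eq:sss_powerlike_gt_excf} (already used to derive \eqref{eq:sup_sub_radius}), which presumably packages exactly the comparison between $\exw$, $\excf$, and their inverses that is needed here; the rest of the argument is then the routine ODE manipulation sketched above.
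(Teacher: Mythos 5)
Your reduction to the ODE \eqref{eq:rad_rpolf} and the identification $\rw(s)=\rpolf_{s}(s)^{p}$ are exactly the paper's starting point, and the exact relation $\varphi(\rpol)=\frac{p-1}{N-p}\flopr(\rpol)^{-\frac{N-p}{p-1}}$ you obtain by inverting is \eqref{eq:rad_rs}. The structural part of the proposal is therefore correct.

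The gap is in the asymptotic analysis, and it is a genuine one, not just a missing reference. For \eqref{eq:rad_rw_est} you propose to ``plug $\rpol\sim\exw^{(-1)}(\log s)$ into the expression for $\flopr'(\rpol)$.'' But
\begin{equation*}
  \rpolf_{s}(s)
  =
  \frac{1}{\flopr'(\rpolf(s))}
  =
  e^{\exw(\rpol)/(p-1)}\,\rpol^{(N-1)/(p-1)}\,s^{-(N-1)/(p-1)}\,,
  \qquad
  \rpol=\rpolf(s)\,,
\end{equation*}
contains the exponential $e^{\exw(\rpol)/(p-1)}$, and the two-sided bound $\rpol\sim\exw^{(-1)}(\log s)$ (equivalently $\exw(\rpol)\sim\log s$ in the sense of \eqref{eq:rad_sim}) only gives $s^{C_{1}/(p-1)}\le e^{\exw(\rpol)/(p-1)}\le s^{C_{2}/(p-1)}$. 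That is far too weak to deduce $\rpolf_{s}(s)\sim\frac{\exw^{(-1)}(\log s)}{\log s}\,s^{-1}$; you lose the precise power of $s$ the moment you exponentiate an asymptotic. The auxiliary function $\Hf$ of \eqref{eq:rad_H} is precisely what the paper introduces to avoid this: one shows $\Hf(\rpol)\sim\varphi(\rpol)$ (by comparing $\Hf'/\varphi'$ in \eqref{eq:rad_der_ratio}, not by any doubling lemma), combines with the exact identity $\varphi(\rpol)=\frac{p-1}{N-p}s^{-(N-p)/(p-1)}$ to get $\Hf(\rpolf(s))\sim s^{-(N-p)/(p-1)}$, and then writes $\rpolf_{s}(s)=\Hf(\rpolf)^{-1}\frac{\rpolf}{\exw(\rpolf)}s^{-(N-1)/(p-1)}$, which eliminates the exponential entirely before any asymptotics are taken. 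Your appeal to Lemma~\ref{l:sss_excf} and \eqref{eq:sss_powerlike_gt_excf} does not fill this hole: those results compare $\exw$, $\excf$ and their inverses, but they say nothing about the behaviour of the integral $\varphi$, which is the crux. The same device is also what makes the derivation of \eqref{eq:rad_r_asy_n} itself rigorous, since ``matching the leading behaviour of the integral'' against $\flopr^{-(N-p)/(p-1)}$ is exactly the statement $\varphi\sim\Hf$.

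Two smaller points: the ODE \eqref{eq:rad_rpolf} is singular at $s=0$, so prescribing $\rpolf(0)=0$ directly is not legitimate; the paper fixes data at $s=1$ with a carefully chosen $\rpol_{*}$ (Section~\ref{s:app}) and then proves $\rpolf(0{+})=0$ and $\rpolf_{s}(0)=1$ via the auxiliary autonomous ODE and L'H\^{o}pital. This is the content of the claim $\rw\in C([0,+\infty))$ with $\rw(0)=1$ and is not a mere ``normalization.''
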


Here and in the following we use the notation, for positive functions $F_{1}$, $F_{2}$,
\begin{equation}
  \label{eq:rad_sim}
  F_{1}(s)\sim F_{2}(s)
  \quad
  \text{if and only if}
  \quad
  C_{1}
  \le
  \frac{F_{2}(s)}{F_{1}(s)}
  \le
  C_{2}
  \,,
  \quad
  s\ge \bar s
  \,,
\end{equation}
for suitable $C_{1}$, $C_{2}$, $\bar s>0$.

Owing to Theorem~\ref{t:transform}, supersolutions and subsolutions of the type \eqref{eq:sss_sup}--\eqref{eq:sss_ups_2} yield radial counterparts for equation \eqref{eq:rad_N}. Thus, from our  Theorems \ref{t:sup_sup}, \ref{t:sub_sub} the following comparison results follow immediately.

\begin{theorem}
  \label{t:sup_sup_dens}
  Let $v$ be a strong solution to \eqref{eq:rad_N}, for the specific $\rw$ found in Theorem~\ref{t:transform}, under assumptions \eqref{eq:degen} and \eqref{eq:powerlike}, such that $v(x,0)$ is compactly supported and bounded.
  \\
  Then there exist constants $t_{0}$, $C_{*}$, $\rpol_{0}$ and $\varGamma$ such that $\ups(\rpolf(\abs{x}),t)$ as in \eqref{eq:sss_sup}--\eqref{eq:sss_ups_2} is a supersolution to \eqref{eq:rad_N}, and
  \begin{equation}
    \label{eq:sup_sup_dens_n}
    v(x,t)
    \le
    \ups(\rpolf(\abs{x}),t)
    \,,
    \qquad
    (x,t)\in S_{\infty}
    \,.
  \end{equation}
  We have also
  \begin{equation}
    \label{eq:sup_sup_dens_nn}
    \supp v(t)
    \subset
    B_{\tilde R(t)}(0)
    \,,
    \qquad
    \log \tilde R(t)
    \sim
    \log t
    \,,
    \qquad
    t>t_{0}
    \,.
  \end{equation}
  The constants involved in \eqref{eq:sup_sup_dens_n}--\eqref{eq:sup_sup_dens_nn} depend on the parameters appearing in the assumptions and on $v(x,0)$.
\end{theorem}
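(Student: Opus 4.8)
The plan is to obtain Theorem~\ref{t:sup_sup_dens} as a corollary of Theorem~\ref{t:sup_sup}, transporting the supersolution, the comparison inequality, and the support bound back and forth through the space change of variables of Theorem~\ref{t:transform}.

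First I would pull $v$ back to a solution of \eqref{eq:pde}. Writing $v(x,t)=V(\abs{x},t)$ and letting $\rpol=\rpolf(s)$ be the increasing change of variables of Theorem~\ref{t:transform} associated with the density $\rw$ fixed there, I set $U(\rpol,t)=V(\rpolf^{(-1)}(\rpol),t)$. By the solution part of Theorem~\ref{t:transform} this $U$ is a radial strong solution of \eqref{eq:pde} with weight $\ew(x)=e^{\exw(\abs{x})}$ and the same $p$, $m$, $\exw$, so that \eqref{eq:degen} and \eqref{eq:powerlike} continue to hold. Since $\rpolf$ is continuous and increasing and $v(\cdot,0)$ is bounded and compactly supported, so is $U(\cdot,0)$; moreover $\sup U(\cdot,0)=\sup v(\cdot,0)$ and the support radius of $U(\cdot,0)$ is $\rpolf$ applied to the support radius of $v(\cdot,0)$.

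Next I would invoke Theorem~\ref{t:sup_sup} for $U$: it supplies constants $t_{0}$, $C_{*}$, $\rpol_{0}$, $\varGamma$ — depending on the data only through $\sup v(\cdot,0)$ and the support radius of $v(\cdot,0)$, in agreement with the stated dependence on $v(x,0)$ — such that $\ups$ as in \eqref{eq:sss_sup}--\eqref{eq:sss_ups_2} is a radial, compactly supported strong supersolution of \eqref{eq:pde}, $U(\rpol,t)\le\ups(\rpol,t)$ for all $\rpol\ge0$ and $t>0$, and $\supp U(t)\subset B_{R(t)}(0)$ with $R(t)=\gamma\,\exw^{(-1)}(\log t)$ for $t>t_{0}$. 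Since $\ups$ is radial and compactly supported, the supersolution part of Theorem~\ref{t:transform} makes $\ups(\rpolf(\abs{x}),t)$ a supersolution of \eqref{eq:rad_N}, and evaluating the comparison at $\rpol=\rpolf(\abs{x})$ yields
\begin{equation*}
  v(x,t)=U(\rpolf(\abs{x}),t)\le\ups(\rpolf(\abs{x}),t)\,,\qquad (x,t)\in S_{\infty}\,,
\end{equation*}
which is \eqref{eq:sup_sup_dens_n}.

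Finally I would convert the support estimate. From $v(x,t)>0\Rightarrow U(\rpolf(\abs{x}),t)>0\Rightarrow\rpolf(\abs{x})\le R(t)$ one obtains $\supp v(t)\subset B_{\tilde R(t)}(0)$ with $\tilde R(t):=\rpolf^{(-1)}(R(t))$. To check $\log\tilde R(t)\sim\log t$, note that $\rpolf(\tilde R(t))=R(t)=\gamma\,\exw^{(-1)}(\log t)$, so \eqref{eq:rad_r_asy_n} makes the ratio $\exw^{(-1)}(\log\tilde R(t))/\exw^{(-1)}(\log t)$ bounded above and below by positive constants; since integrating \eqref{eq:powerlike} shows that $\exw$, and hence $\exw^{(-1)}$, grows between two fixed positive powers of its argument, a two-sided bound on a ratio of values of such a function forces a two-sided bound on the ratio of the arguments, i.e., $\log\tilde R(t)\sim\log t$. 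There is no genuinely hard step here: the statement is essentially a corollary, and the only point demanding care is already built into Theorem~\ref{t:transform}, namely that the change of variables carries strong solutions and strong supersolutions of \eqref{eq:pde} in the sense of Definition~\ref{d:weaksol} to those of \eqref{eq:rad_N} and conversely — in particular that the integrability and continuity conditions, and the reduced regularity of $\ups$ (which is not smooth across $\rpol=\rpol_{0}$ nor at its free boundary), survive composition with $\rpolf$. Granting that, the remaining work is the bookkeeping of constants and the elementary asymptotic computation just performed.
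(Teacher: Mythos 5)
Your plan is essentially the paper's own (the paper only says the result ``follows immediately'' from Theorems~\ref{t:sup_sup}/\ref{t:sub_sub} via Theorem~\ref{t:transform}), and you correctly fill in the unstated details. The asymptotic computation at the end, deducing $\log\tilde R(t)\sim\log t$ from $\rpolf(\tilde R(t))=\gamma\exw^{(-1)}(\log t)$ together with \eqref{eq:rad_r_asy_n} and the doubling property of $\exw^{(-1)}$ inherited from \eqref{eq:powerlike}, is exactly right and is the only place that needs a genuine argument.

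One soft spot: by writing $v(x,t)=V(\abs{x},t)$ and setting $U(\rpol,t)=V(\rpolf^{(-1)}(\rpol),t)$ you tacitly assume $v$ is radial, which the theorem does not. The cleaner direction, and the one Theorem~\ref{t:transform} is phrased for, is to push forward rather than pull back: use Lemma~\ref{l:sss_comp} with $L$ replaced by $\rpolf(L)$ (where $L$ is the support radius of $v(\cdot,0)$) and $M=\sup v(\cdot,0)$ to produce a radial supersolution $\ups$ of \eqref{eq:pde} with $\ups(\rpol,0)\ge M$ for $\rpol\le\rpolf(L)$; Theorem~\ref{t:transform} then makes $\ups(\rpolf(\abs{x}),t)$ a radial supersolution of \eqref{eq:rad_N}; comparison in \eqref{eq:rad_N} against the (possibly non-radial) $v$ gives \eqref{eq:sup_sup_dens_n} directly, since $\ups(\rpolf(\abs{x}),0)\ge M\ge v(x,0)$ on $\{\abs{x}\le L\}$ and $v(x,0)=0$ elsewhere. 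Your support argument then reads $\ups(\rpolf(\abs{x}),t)=0$ for $\rpolf(\abs{x})>\excf^{(-1)}(\tau)$, which yields the same $\tilde R(t)=\rpolf^{(-1)}(R(t))$. This is a reformulation rather than a new idea, but it removes the unwarranted radiality hypothesis on $v$.
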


\begin{theorem}
  \label{t:sub_sub_dens}
  Let $v$ be a strong solution to \eqref{eq:rad_N}, for the specific $\rw$ found in Theorem~\ref{t:transform}, under assumptions \eqref{eq:degen} and \eqref{eq:powerlike}, such that $v(x,0)$ is compactly supported, bounded and
$v(x,0)\ge \eps>0$ for $\abs{x}\le\ell$, $\ell>0$.
  \\
  Then there exist constants $t_{0}$, $C_{*}$, $\rpol_{0}$ and $\varGamma$ such that $\ups(\rpolf(\abs{x}),t)$ as in \eqref{eq:sss_sup}--\eqref{eq:sss_ups_2} is a subsolution \eqref{eq:rad_N}, and
  \begin{equation}
    \label{eq:sub_sub_dens_n}
    v(x,t)
    \ge
    \ups(\rpolf(\abs{x}),t)
    \,,
    \qquad
    (x,t)\in S_{\infty}
    \,.
  \end{equation}
  We have also
  \begin{equation}
    \label{eq:sub_sub_dens_nn}
    \supp v(t)
    \supset
    B_{\tilde R(t)}(0)
    \,,
    \quad
    \log\tilde R(t)
    \sim
    \log t
    \,,
    \qquad
    t>t_{0}
    \,,
  \end{equation}
  The constants involved in \eqref{eq:sub_sub_dens_n}--\eqref{eq:sub_sub_dens_nn} depend on the parameters appearing in the assumptions and on $v(x,0)$.
\end{theorem}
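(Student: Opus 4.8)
\emph{Sketch of proof.} The plan is to obtain Theorem~\ref{t:sub_sub_dens} by composing Theorem~\ref{t:transform} with Theorem~\ref{t:sub_sub}, the only genuinely new work being the bookkeeping of the initial data through the change of variables. First I would fix, as in Proposition~\ref{p:main_sup_sub} and in the proof of Theorem~\ref{t:sub_sub}, constants $C_{*}$, $\varGamma$, $t_{0}>1$, $\rpol_{0}$, $\nu_{0}$ for which $\ups$ in \eqref{eq:sss_sup}--\eqref{eq:sss_ups_2} is a radial strong subsolution of \eqref{eq:pde}. By Theorem~\ref{t:transform}, the change of variables $\rpol=\rpolf(s)$ that produces the density $\rw$ singled out there turns this $\ups$ into $(x,t)\mapsto\ups(\rpolf(\abs{x}),t)$, which is a strong subsolution of \eqref{eq:rad_N}; this transfer is automatic once $\ups$ is known to solve the differential inequality for \eqref{eq:pde}, since $\rpolf$ depends only on $\exw$ and not on the particular subsolution.

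Next I would arrange the ordering of the initial traces. Let $\tilde\rpol(t)$ denote the support radius of $\ups(t)$, so that $\tilde\rpol(t)=\excf^{(-1)}(\varGamma\log(t+t_{0}))$ for large $t$; since $\rpolf$ is increasing with $\rpolf(0)=0$, the support of $\ups(\rpolf(\cdot),t)$ is the ball of radius $\tilde R(t):=\rpolf^{(-1)}(\tilde\rpol(t))$, while $\sup_{x}\ups(\rpolf(\abs{x}),0)=\ups(0,0)$. Tuning the free constants exactly as in the proof of Theorem~\ref{t:sub_sub}, but now imposing in addition $\tilde\rpol(0)\le\rpolf(\ell)$ (equivalently $\tilde R(0)\le\ell$) together with $\ups(0,0)\le\eps$, we get $\supp\ups(\rpolf(\cdot),0)\subset\{\abs{x}\le\ell\}$; since $v(x,0)\ge\eps$ on this set and $v(x,0)\ge 0$ elsewhere, this yields $\ups(\rpolf(\abs{x}),0)\le v(x,0)$ on $\RN$. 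The comparison principle for strong sub/supersolutions of \eqref{eq:rad_N} with compactly supported data — available by the methods of \cite{Tsutsumi:1988}, see also \cite{Andreucci:Tedeev:2021b}, and applicable because $\ups(\rpolf(\cdot),t)$ stays compactly supported — then delivers \eqref{eq:sub_sub_dens_n}.

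For the support statement \eqref{eq:sub_sub_dens_nn} I would keep $\tilde R(t)=\rpolf^{(-1)}(\tilde\rpol(t))$, so that \eqref{eq:sub_sub_dens_n} forces $\supp v(t)\supset B_{\tilde R(t)}(0)$, and then feed the identity $\rpolf(\tilde R(t))=\tilde\rpol(t)$ into the asymptotics $\rpolf(s)\sim\exw^{(-1)}(\log s)$ from \eqref{eq:rad_r_asy_n} and $\tilde\rpol(t)\sim\exw^{(-1)}(\log t)$ from \eqref{eq:sup_sub_radius}, obtaining $\exw^{(-1)}(\log\tilde R(t))\sim\exw^{(-1)}(\log t)$. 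Integrating the two-sided bound \eqref{eq:powerlike} on $(\log\exw)'$ between $s$ and $\lambda s$ gives $\lambda^{\alpha_{1}}\exw(s)\le\exw(\lambda s)\le\lambda^{\alpha_{2}}\exw(s)$ for $\lambda\ge 1$, i.e.\ both $\exw$ and $\exw^{(-1)}$ are doubling; applying $\exw$ to the displayed equivalence and invoking this doubling then gives $\log\tilde R(t)\sim\log t$.

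I expect the main — indeed essentially the only — obstacle to be checking that the constants $C_{*}$, $\varGamma$, $t_{0}>1$, $\rpol_{0}$, $\nu_{0}$ can be chosen to respect simultaneously all the structural constraints of Proposition~\ref{p:main_sup_sub} and the new requirement $\tilde\rpol(0)\le\rpolf(\ell)$. Since $\tilde\rpol(0)$ can be made arbitrarily small (it tends to $0$ when $\varGamma\to 0$ with the other constants fixed) while $\rpolf(\ell)>0$ is a fixed number, this should go through without difficulty; it is, however, the one place where the argument differs from the proof of Theorem~\ref{t:sub_sub}, which compares $\tilde\rpol(0)$ directly with $\ell$.
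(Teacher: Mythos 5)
Your proposal is correct and follows the route the paper intends: pull the subsolution of Section~\ref{s:ssb} through the change of variable of Theorem~\ref{t:transform}, adjust the constants so the transformed initial trace is dominated by $v(\cdot,0)$, invoke the comparison principle, and transfer the support radius estimate via \eqref{eq:rad_r_asy_n} and \eqref{eq:sup_sub_radius} — the paper itself records this as an immediate consequence of Theorems~\ref{t:transform} and~\ref{t:sub_sub}. One small inaccuracy worth flagging: in Section~\ref{s:ssb} the constant $\varGamma$ is pinned down by \eqref{eq:ssb_ups_Gamma0} in terms of $\rpol_{0}$ and $t_{0}$ and is not a free dial, so the correct mechanism for making $\tilde\rpol(0)$ small is to shrink $\lambda$ in \eqref{eq:ssb_ups_r0}--\eqref{eq:ssb_ups_C} (which shrinks $\rpol_{0}$ and hence $\rpol_{1}$ via \eqref{eq:ssb_ups_r1}, exactly as in the proof of Lemma~\ref{l:ssb_comp}), rather than to send $\varGamma\to 0$ with the other constants fixed.
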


\begin{remark}
  \label{r:rad_N}
  Clearly the sup estimate valid for $v$ as in Theorems
  \ref{t:sup_sup_dens} and \ref{t:sub_sub_dens} is the same as in
  \eqref{eq:sup_decay}, which should be compared with the result valid for $\rw(s)\sim s^{-\alpha}$, $\alpha>p$, that is the universal bound $t^{-1/(p+m-3)}$ (see \cite{Andreucci:Tedeev:2021b}). Our present bound however depends on the initial data, as the known bound valid in the subcritical case $\alpha<p$, whose decay rate is $t^{-(N-\alpha)/K}$, $K=(N-\alpha)(p+m-3)+p-\alpha$.
\end{remark}

\textbf{Plan of the paper.}
Section~\ref{s:pre} contains some elementary auxiliary results.
Then Proposition~\ref{p:main_sup_sub} is proved in Sections \ref{s:sss} (supersolutions) and \ref{s:ssb} (subsolutions).
Finally in Section~\ref{s:rad} we carry out the transformation of variables linking \eqref{eq:pde} and \eqref{eq:rad_N}. 

\section{Technical preliminaries}
\label{s:pre}

Note that as a consequence of \eqref{eq:powerlike} we have
\begin{equation}
  \label{eq:powerlike_inv}
  \alpha_{2}^{-1}
  \frac{\exw^{(-1)}(t)}{t}
  \le
  \der{\exw^{(-1)}}{t}(t)
  \le
  \alpha_{1}^{-1}
  \frac{\exw^{(-1)}(t)}{t}
  \,,
  \qquad
  t>0
  \,,
\end{equation}
and also the standard inequalities
\begin{gather}
  \label{eq:powerlike_gt}
  \lambda^{\alpha_{1}}
  \exw(\rpol)
  \le
  \exw(\lambda\rpol)
  \le
  \lambda^{\alpha_{2}}
  \exw(\rpol)
  \,,
  \qquad
  \lambda\ge 1
  \,,
  \\
  \label{eq:powerlike_lt}
  \lambda^{\alpha_{2}}
  \exw(\rpol)
  \le
  \exw(\lambda\rpol)
  \le
  \lambda^{\alpha_{1}}
  \exw(\rpol)
  \,,
  \qquad
  \lambda\le 1
  \,.
\end{gather}

We need the following technical Lemmas. As a matter of fact, in Lemmas \ref{l:sss_excf}, \ref{l:sss_excf2} and \ref{l:sss_rat} we only need $0<\alpha_{1}\le \alpha_{2}$; in Lemma~\ref{l:sss_comb} we also assume $\alpha_{2}<p$.

\begin{lemma}
  \label{l:sss_excf}
  We have
  \begin{equation}
    \label{eq:sss_excf_n}
    \frac{\exw(\rpol)}{\alpha_{2}}
    \le
    \excf(\rpol)
    \le
    \frac{\exw(\rpol)}{\alpha_{1}}
    \,,
    \qquad
    \rpol>0
    \,,
  \end{equation}
  and its obvious consequence
  \begin{equation}
    \label{eq:sss_excf_nn}
    \alpha_{1}
    \frac{\excf(\rpol)}{\rpol}
    \le
    \excf'(\rpol)
    =
    \frac{\exw(\rpol)}{\rpol}
    \le
    \alpha_{2}
    \frac{\excf(\rpol)}{\rpol}
    \,,
    \qquad
    \rpol>0
    \,.
  \end{equation}
\end{lemma}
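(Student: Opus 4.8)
The plan is to derive both inequalities in \eqref{eq:sss_excf_n} directly from the defining integral $\excf(\rpol)=\int_{0}^{\rpol}\exw(s)/s\di s$ together with the bound \eqref{eq:powerlike_lt} (which compares $\exw(s)$ and $\exw(\rpol)$ for $s\le\rpol$). First I would fix $\rpol>0$ and, for $0<s\le\rpol$, write $s=\lambda\rpol$ with $\lambda=s/\rpol\le 1$; then \eqref{eq:powerlike_lt} gives
\begin{equation*}
  \Big(\frac{s}{\rpol}\Big)^{\alpha_{2}}\exw(\rpol)
  \le
  \exw(s)
  \le
  \Big(\frac{s}{\rpol}\Big)^{\alpha_{1}}\exw(\rpol)
  \,,
  \qquad
  0<s\le\rpol
  \,.
\end{equation*}
Dividing by $s$ and integrating over $s\in(0,\rpol)$ — the integrals $\int_{0}^{\rpol}s^{\alpha_{i}-1}\di s=\rpol^{\alpha_{i}}/\alpha_{i}$ converge since $\alpha_{1},\alpha_{2}>0$ — yields
\begin{equation*}
  \frac{\exw(\rpol)}{\alpha_{2}}
  =
  \frac{\exw(\rpol)}{\rpol^{\alpha_{2}}}\int_{0}^{\rpol}s^{\alpha_{2}-1}\di s
  \le
  \excf(\rpol)
  \le
  \frac{\exw(\rpol)}{\rpol^{\alpha_{1}}}\int_{0}^{\rpol}s^{\alpha_{1}-1}\di s
  =
  \frac{\exw(\rpol)}{\alpha_{1}}
  \,,
\end{equation*}
which is exactly \eqref{eq:sss_excf_n}. (This also re-confirms that the integral defining $\excf$ is finite, as remarked after the definition.)

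For \eqref{eq:sss_excf_nn}, the identity $\excf'(\rpol)=\exw(\rpol)/\rpol$ is immediate from the fundamental theorem of calculus, using $\exw\in C((0,+\infty))$. The remaining two inequalities then follow by simply dividing \eqref{eq:sss_excf_n} by $\rpol$: from $\exw(\rpol)/\alpha_{2}\le\excf(\rpol)$ we get $\exw(\rpol)/\rpol\le\alpha_{2}\excf(\rpol)/\rpol$, and from $\excf(\rpol)\le\exw(\rpol)/\alpha_{1}$ we get $\alpha_{1}\excf(\rpol)/\rpol\le\exw(\rpol)/\rpol$. Combining gives the chain $\alpha_{1}\excf(\rpol)/\rpol\le\excf'(\rpol)\le\alpha_{2}\excf(\rpol)/\rpol$.

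There is essentially no obstacle here; the only point requiring a little care is the justification of interchanging the pointwise bound with the integral and the convergence at $s=0^{+}$, both of which are handled by the positivity of $\alpha_{1}$. Note that the argument uses only $0<\alpha_{1}\le\alpha_{2}$ and not $\alpha_{2}<p$, consistent with the remark preceding the statement.
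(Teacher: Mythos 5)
Your proof is correct and follows essentially the same approach as the paper: you invoke \eqref{eq:powerlike_lt} to bound $\exw(s)$ by $(s/\rpol)^{\alpha_i}\exw(\rpol)$ for $s\le\rpol$ and integrate, which is exactly what the paper does after the trivial substitution $s=\lambda\rpol$. Your added remarks on convergence and on the derivation of \eqref{eq:sss_excf_nn} are accurate but amount to spelling out what the paper leaves implicit.
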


\begin{proof}
  We get from \eqref{eq:powerlike_lt}
  \begin{equation*}
    \excf(\rpol)
    =
    \int_{0}^{1}
    \frac{\exw(\lambda \rpol)}{\lambda}
    \di\lambda
    \le
    \exw(\rpol)
    \int_{0}^{1}
    \lambda^{\alpha_{1}-1}
    \di\lambda
    =
    \frac{\exw(\rpol)}{\alpha_{1}}
    \,,
  \end{equation*}
  i.e., the second inequality in \eqref{eq:sss_excf_n}. The first inequality can be proved in the same way.
\end{proof}

By virtue of \eqref{eq:sss_excf_nn}, it follows that $\excf$ and $\excf^{(-1)}$ satisfy estimates similar to \eqref{eq:powerlike_inv}--\eqref{eq:powerlike_lt}. We note specifically that for $\rpol>0$
\begin{equation}
  \label{eq:sss_powerlike_gt_excf}
  \lambda^{\frac{1}{\alpha_{2}}}
  \excf^{(-1)}(\rpol)
  \le
  \excf^{(-1)}(\lambda\rpol)
  \le
  \lambda^{\frac{1}{\alpha_{1}}}
  \excf^{(-1)}(\rpol)
  \,,
  \qquad
  \lambda\ge 1
  \,.
\end{equation}

\begin{lemma}
  \label{l:sss_excf2}
  The following estimates are in force:
  \begin{equation}
    \label{eq:sss_excf2_n}
    \eta_{1}
    \excf(\rpol)
    \le
    \rpol^{2}
    \excf''(\rpol)
    \le
    \eta_{2}
    \excf(\rpol)
    \,,
    \qquad
    \rpol>0
    \,,
  \end{equation}
  where we may select
  \begin{gather*}
    \eta_{1}
    =
    (\alpha_{1}-1)
    \alpha_{2}
    \,,
    \quad
    \alpha_{1}<1
    \,;
    \qquad
    \eta_{1}
    =
    (\alpha_{1}-1)
    \alpha_{1}
    \,,
    \quad
    \alpha_{1}\ge 1
    \,;
    \\
    \eta_{2}
    =
    (\alpha_{2}-1)
    \alpha_{1}
    \,,
    \quad
    \alpha_{2}<1
    \,;
    \qquad
    \eta_{2}
    =
    (\alpha_{2}-1)
    \alpha_{2}
    \,,
    \quad
    \alpha_{2}\ge 1
    \,.
  \end{gather*}
\end{lemma}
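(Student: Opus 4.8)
The plan is to reduce the claimed two-sided bound to the hypothesis \eqref{eq:powerlike} by a single differentiation. By construction $\excf'(\rpol)=\exw(\rpol)/\rpol$, and since $\exw\in C^{1}((0,+\infty))$ the function $\excf$ is $C^{2}$ on $(0,+\infty)$ with
\[
  \rpol^{2}\excf''(\rpol)=\rpol\,\exw'(\rpol)-\exw(\rpol)\,,\qquad \rpol>0\,.
\]
Feeding in the bounds $\alpha_{1}\exw(\rpol)/\rpol\le\exw'(\rpol)\le\alpha_{2}\exw(\rpol)/\rpol$ from \eqref{eq:powerlike} gives at once
\[
  (\alpha_{1}-1)\,\exw(\rpol)\le \rpol^{2}\excf''(\rpol)\le(\alpha_{2}-1)\,\exw(\rpol)\,,\qquad\rpol>0\,.
\]

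It then remains only to trade $\exw(\rpol)$ for $\excf(\rpol)$ via Lemma~\ref{l:sss_excf}, i.e.\ $\alpha_{1}\excf(\rpol)\le\exw(\rpol)\le\alpha_{2}\excf(\rpol)$. The sole subtlety, and the reason the stated $\eta_{1},\eta_{2}$ branch at $\alpha_{i}=1$, is that multiplying these inequalities by the factors $\alpha_{1}-1$ or $\alpha_{2}-1$ reverses them when those factors are negative. Concretely, for the lower bound one uses $\exw(\rpol)\ge\alpha_{1}\excf(\rpol)$ when $\alpha_{1}\ge1$, yielding $\eta_{1}=(\alpha_{1}-1)\alpha_{1}$, and $\exw(\rpol)\le\alpha_{2}\excf(\rpol)$ when $\alpha_{1}<1$, yielding $\eta_{1}=(\alpha_{1}-1)\alpha_{2}$; symmetrically, for the upper bound one uses $\exw(\rpol)\le\alpha_{2}\excf(\rpol)$ when $\alpha_{2}\ge1$ and $\exw(\rpol)\ge\alpha_{1}\excf(\rpol)$ when $\alpha_{2}<1$, which produces the two listed values of $\eta_{2}$.

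I expect no real obstacle here: the argument is one line of calculus followed by a careful tracking of signs. The only point worth checking is the regularity invoked, namely that $\excf$ is twice differentiable on $(0,+\infty)$, which follows from $\exw\in C^{1}((0,+\infty))$ together with the convergence near $0$ of the integral defining $\excf$ guaranteed by \eqref{eq:powerlike_lt}; note that the chain of inequalities above already delivers the comparability of $\rpol^{2}\excf''(\rpol)$ with $\excf(\rpol)$ uniformly for all $\rpol>0$, so no separate treatment of small $\rpol$ is needed.
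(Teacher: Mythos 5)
Your argument is correct and is essentially identical to the paper's own proof: differentiate to get $\rpol^{2}\excf''(\rpol)=\rpol\exw'(\rpol)-\exw(\rpol)$, sandwich this between $(\alpha_{1}-1)\exw(\rpol)$ and $(\alpha_{2}-1)\exw(\rpol)$ via \eqref{eq:powerlike}, and then replace $\exw$ by $\excf$ using Lemma~\ref{l:sss_excf} with the sign-dependent case split. The paper leaves the final case analysis to the reader; you have simply written it out explicitly, and correctly.
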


\begin{proof}
  We have in any case
  \begin{equation*}
    (\alpha_{2}-1)\exw(\rpol)
    \ge
    \rpol^{2}\excf''(\rpol)
    =
    \rpol\exw'(\rpol)-\exw(\rpol)
    \ge
    (\alpha_{1}-1)\exw(\rpol)
    \,.
  \end{equation*}
  Then the claim follows from using \eqref{eq:sss_excf_n} according to the various possible cases.
\end{proof}

\begin{lemma}
  \label{l:sss_rat}
  We have
  \begin{equation}
    \label{eq:sss_rat_n}
    (p-\alpha_{2})
    \frac{\rat(\rpol)}{\rpol}
    \le
    \rat'(\rpol)
    \le
    (p-\alpha_{1})
    \frac{\rat(\rpol)}{\rpol}
    \,,
    \qquad
    \rpol>0
    \,,
  \end{equation}
  and
  \begin{equation}
    \label{eq:sss_rat_nn}
    c_{1}
    \frac{\rat(\rpol)}{\rpol^{2}}
    \le
    \rat''(\rpol)
    \le
    c_{2}
    \frac{\rat(\rpol)}{\rpol^{2}}
    \,,
    \qquad
    \rpol>0
    \,,
  \end{equation}
  where
  \begin{alignat*}2
    c_{1}
    &=
      p
      (p-1-2\alpha_{2})
      +
      2\alpha_{1}^{2}
      -
      \alpha_{2}
      (\alpha_{2}-1)
      \,,
    &\qquad&
             \alpha_{2}\ge 1
             \,,
    \\
    c_{1}
    &=
      p
      (p-1-2\alpha_{2})
      +
      2\alpha_{1}^{2}
      -
      \alpha_{1}
      (\alpha_{2}-1)
      \,,
    &\qquad&
             \alpha_{2}< 1
             \,,
    \\
    c_{2}
    &=
      p
      (p-1-2\alpha_{1})
      +
      2\alpha_{2}^{2}
      -
      \alpha_{1}
      (\alpha_{1}
      -
      1
      )
      \,,
    &\qquad&
             \alpha_{1}\ge 1
             \,,
    \\
    c_{2}
    &=
      p
      (p-1-2\alpha_{1})
      +
      2\alpha_{2}^{2}
      -
      \alpha_{2}
      (\alpha_{1}
      -
      1
      )
      \,,
    &\qquad&
             \alpha_{1}< 1
             \,.
  \end{alignat*}
\end{lemma}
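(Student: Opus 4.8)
The plan is to reduce both estimates to the two quantities
$a(\rpol)=\rpol\excf'(\rpol)/\excf(\rpol)$ and $b(\rpol)=\rpol^{2}\excf''(\rpol)/\excf(\rpol)$, which are already under control: by \eqref{eq:sss_excf_nn} we have $\alpha_{1}\le a(\rpol)\le\alpha_{2}$, and by Lemma~\ref{l:sss_excf2} we have $\eta_{1}\le b(\rpol)\le\eta_{2}$, for every $\rpol>0$. All the derivatives below make sense because $\excf'(\rpol)=\exw(\rpol)/\rpol$ with $\exw\in C^{1}((0,+\infty))$, whence $\excf\in C^{2}((0,+\infty))$, and $\rat$ is then $C^{2}$ on $(0,+\infty)$ since $\excf(\rpol)>0$ there.

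For \eqref{eq:sss_rat_n}, differentiate $\rat=\rpol^{p}/\excf$ and use $\excf'=\exw/\rpol$ to obtain the identity
\begin{equation*}
  \rat'(\rpol)
  =
  \frac{\rat(\rpol)}{\rpol}
  \bigl(p-a(\rpol)\bigr)
  \,,
  \qquad
  \rpol>0
  \,;
\end{equation*}
since $\rat(\rpol)/\rpol>0$, this follows at once from $\alpha_{1}\le a(\rpol)\le\alpha_{2}$. A direct further differentiation, collecting terms over the common factor $\rpol^{p-2}/\excf=\rat(\rpol)/\rpol^{2}$, gives
\begin{equation*}
  \rat''(\rpol)
  =
  \frac{\rat(\rpol)}{\rpol^{2}}
  \bigl[
  p(p-1)
  -
  2p\,a(\rpol)
  +
  2a(\rpol)^{2}
  -
  b(\rpol)
  \bigr]
  \,,
  \qquad
  \rpol>0
  \,.
\end{equation*}
Here I would bound the bracket termwise, not jointly in $a$, since only an interval bound on $a$ is available: for the lower bound use $a(\rpol)^{2}\ge\alpha_{1}^{2}$, $-2p\,a(\rpol)\ge-2p\alpha_{2}$ and $-b(\rpol)\ge-\eta_{2}$, so the bracket is $\ge p(p-1)-2p\alpha_{2}+2\alpha_{1}^{2}-\eta_{2}=c_{1}$; for the upper bound use $a(\rpol)^{2}\le\alpha_{2}^{2}$, $-2p\,a(\rpol)\le-2p\alpha_{1}$ and $-b(\rpol)\le-\eta_{1}$, so the bracket is $\le p(p-1)-2p\alpha_{1}+2\alpha_{2}^{2}-\eta_{1}=c_{2}$. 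The four displayed expressions for $c_{1}$, $c_{2}$ are exactly the four cases for $\eta_{2}$ and $\eta_{1}$ in Lemma~\ref{l:sss_excf2}, sorted by the signs of $\alpha_{2}-1$ and $\alpha_{1}-1$; multiplying by $\rat(\rpol)/\rpol^{2}>0$ then yields \eqref{eq:sss_rat_nn}.

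I expect no genuine obstacle here: the only care needed is in carrying out the second differentiation without sign slips and in recognizing the factored form above, together with the bookkeeping of the $\eta_{i}$ cases. Note that sharpness of $c_{1}$, $c_{2}$ is not asserted --- estimating $2a^{2}-2pa$ termwise, rather than optimizing this parabola over $[\alpha_{1},\alpha_{2}]$, is precisely what produces the explicit but not optimal constants.
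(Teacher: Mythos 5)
Your proof is correct and follows the same route as the paper: the same two identities for $\rat'$ and $\rat''$ in terms of $\rpol\excf'/\excf$ and $\rpol^2\excf''/\excf$, followed by termwise estimation via Lemma~\ref{l:sss_excf} (i.e., \eqref{eq:sss_excf_nn}) and Lemma~\ref{l:sss_excf2}. The shorthand $a(\rpol)$, $b(\rpol)$ is a cosmetic difference only, and your remark about deliberately not optimizing the parabola in $a$ correctly explains why the resulting $c_1$, $c_2$ are explicit but not sharp.
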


\begin{proof}
  The inequalities in \eqref{eq:sss_rat_n}, when we also invoke \eqref{eq:sss_excf_nn}, follow from
  \begin{equation}
    \label{eq:sss_rat_j}
    \rat'(\rpol)
    =
    \frac{\rat(\rpol)}{\rpol}
    \Big[
    p
    -
    \frac{r\excf'(\rpol)}{\excf(\rpol)}
    \Big]
    \,.
  \end{equation}

  Then we calculate
  \begin{equation}
    \label{eq:sss_rat_i}
    \rat''(r)
    =
    \frac{\rat(\rpol)}{\rpol^{2}}
    \Big[
    p(p-1)
    -
    2p\rpol
    \frac{\excf'(\rpol)}{\excf(\rpol)}
    -
    \rpol^{2}
    \frac{\excf''(\rpol)}{\excf(\rpol)}
    +
    2\rpol^{2}
    \frac{\excf'(\rpol)^{2}}{\excf(\rpol)^{2}}
    \Big]
    \,.
  \end{equation}
  The proof is then concluded by estimating
  the ratios $\excf'/\excf$ in \eqref{eq:sss_rat_i} by means of \eqref{eq:sss_excf_nn}, and the ratio $\excf''/\excf$  by means of \eqref{eq:sss_excf2_n}. 
\end{proof}

We do not make any claim on the signs of the constants $c_{i}$ appearing in Lemma~\ref{l:sss_excf2}.

\begin{lemma}
  \label{l:sss_comb}
  The function
  \begin{equation*}
    \auxf(\rpol)
    =
    \rat(\rpol)^{\frac{1}{p-1}}
    \Big(
    \delta_{1}
    +
    \frac{\delta_{2}}{\excf(\rpol)}
    \Big)
    \,,
  \end{equation*}
  where $\delta_{1}>0$, $\delta_{2}>0$ are given  constants, is increasing in the interval $(\rpol_{1},+\infty)$, for a suitable $\rpol_{1}$ given by 
  \begin{equation}
    \label{eq:sss_comb_n}
    \excf(\rpol_{1})
    =
    \frac{\delta_{2}}{\delta_{1}}
    \frac{p(\alpha_{2}-1)_{+}}{p-\alpha_{2}}
    \,.
  \end{equation}
\end{lemma}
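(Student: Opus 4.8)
The plan is to compute $\auxf'$ directly and identify the sign of the one factor that is not manifestly positive. Since $\auxf(\rpol)=\rat(\rpol)^{\frac{1}{p-1}}\bigl(\delta_{1}+\delta_{2}/\excf(\rpol)\bigr)$, the product and chain rules give
\begin{equation*}
  \auxf'(\rpol)
  =
  \rat(\rpol)^{\frac{1}{p-1}}
  \Bigl[
  \frac{1}{p-1}\,\frac{\rat'(\rpol)}{\rat(\rpol)}
  \Bigl(\delta_{1}+\frac{\delta_{2}}{\excf(\rpol)}\Bigr)
  -
  \frac{\delta_{2}\excf'(\rpol)}{\excf(\rpol)^{2}}
  \Bigr]
  \,.
\end{equation*}
As $\rat(\rpol)^{\frac{1}{p-1}}>0$ for $\rpol>0$, it suffices to show that the bracket is positive precisely for $\rpol>\rpol_{1}$.

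The next step is to rewrite the bracket using the identity \eqref{eq:sss_rat_j}, which reads $\rat'(\rpol)/\rat(\rpol)=\rpol^{-1}\bigl(p-a(\rpol)\bigr)$ with $a(\rpol):=\rpol\excf'(\rpol)/\excf(\rpol)=\exw(\rpol)/\excf(\rpol)$, together with $\excf'(\rpol)/\excf(\rpol)^{2}=a(\rpol)/(\rpol\excf(\rpol))$. Multiplying the bracket by $\rpol$ and using the elementary cancellation $\frac{p-a}{p-1}-a=\frac{p(1-a)}{p-1}$, the bracket equals
\begin{equation*}
  \frac{1}{(p-1)\rpol}
  \Bigl[
  \bigl(p-a(\rpol)\bigr)\delta_{1}
  +
  \frac{p\bigl(1-a(\rpol)\bigr)\delta_{2}}{\excf(\rpol)}
  \Bigr]
  \,.
\end{equation*}

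To conclude, I would invoke \eqref{eq:sss_excf_nn}, which gives $\alpha_{1}\le a(\rpol)\le\alpha_{2}$ for every $\rpol>0$. Since $\alpha_{2}<p$ here, $p-a(\rpol)\ge p-\alpha_{2}>0$, and in all cases $1-a(\rpol)\ge-(\alpha_{2}-1)_{+}$ (when $\alpha_{2}\le 1$ the right side is $0$ and $a(\rpol)\le\alpha_{2}\le 1$; when $\alpha_{2}>1$ it is $1-\alpha_{2}$). Hence the quantity in square brackets is bounded below by $(p-\alpha_{2})\delta_{1}-p(\alpha_{2}-1)_{+}\delta_{2}/\excf(\rpol)$, which is strictly positive if and only if $\excf(\rpol)>\frac{\delta_{2}}{\delta_{1}}\frac{p(\alpha_{2}-1)_{+}}{p-\alpha_{2}}=\excf(\rpol_{1})$. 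Because $\excf$ is strictly increasing (its derivative is $\exw(\rpol)/\rpol>0$), this is equivalent to $\rpol>\rpol_{1}$, so $\auxf'(\rpol)>0$ on $(\rpol_{1},+\infty)$. I do not anticipate a real obstacle: the only point needing a little care is the $\rpol$-uniform lower bound on $1-a(\rpol)$, which is exactly what the positive part $(\alpha_{2}-1)_{+}$ encodes, together with the degenerate case $\alpha_{2}\le 1$ in which \eqref{eq:sss_comb_n} forces $\rpol_{1}=0$ and $\auxf$ is increasing on all of $(0,+\infty)$.
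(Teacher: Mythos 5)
Your proof is correct and follows essentially the same route as the paper: differentiate $\auxf$, use the identity $\rat'/\rat = (p-\rpol\excf'/\excf)/\rpol$ from \eqref{eq:sss_rat_j} to produce the cancellation giving the factor $p(1-\rpol\excf'/\excf)/(p-1)$, and then bound $\rpol\excf'(\rpol)/\excf(\rpol)\le\alpha_{2}$ via \eqref{eq:sss_excf_nn} (equivalently \eqref{eq:sss_excf_n}), so that positivity is guaranteed for $\excf(\rpol)>\excf(\rpol_{1})$. Your remark on the degenerate case $\alpha_{2}\le 1$, where $\rpol_{1}=0$ and $\auxf$ increases on all of $(0,+\infty)$, is a nice explicit observation that the paper leaves implicit.
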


\begin{proof}
  By direct differentiation, and with the help of Lemmas \ref{l:sss_excf} and \ref{l:sss_rat}, we get, on using \eqref{eq:sss_rat_j} in the third equality and \eqref{eq:sss_rat_n} in the inequality,
  \begin{equation}
    \label{eq:sss_comb_i}
    \begin{split}
      \auxf'(\rpol)
      &=
        \rat(\rpol)^{\frac{1}{p-1}-1}
        \frac{\rat'(\rpol)}{p-1}
        \Big(
        \delta_{1}
        +
        \frac{\delta_{2}}{\excf(\rpol)}
        \Big)
        -
        \delta_{2}
        \rat(\rpol)^{\frac{1}{p-1}}
        \frac{\excf'(\rpol)}{\excf(\rpol)^{2}}
        \\
      &=
        \delta_{1}
        \rat(\rpol)^{\frac{1}{p-1}-1}
        \frac{\rat'(\rpol)}{p-1}
        +
        \delta_{2}
        \frac{\rat(\rpol)^{\frac{1}{p-1}}}{\rpol\excf(\rpol)}
        \Big[
        \frac{\rpol}{p-1}
        \frac{\rat'(\rpol)}{\rat(\rpol)}
        -
        \frac{\excf'(\rpol)\rpol}{\excf(\rpol)}
        \Big]
        \\
      &=
        \delta_{1}
        \rat(\rpol)^{\frac{1}{p-1}-1}
        \frac{\rat'(\rpol)}{p-1}
        +
        \frac{p}{p-1}
        \delta_{2}
        \frac{\rat(\rpol)^{\frac{1}{p-1}}}{\rpol\excf(\rpol)}
        \Big[
        1
        -
        \frac{\excf'(\rpol)\rpol}{\excf(\rpol)}
        \Big]
        \\
      &\ge
        \frac{\rat(\rpol)^{\frac{1}{p-1}}}{\rpol}
        \Big\{
        \delta_{1}
        \frac{p-\alpha_{2}}{p-1}
        +
        \frac{p}{p-1}
        \frac{\delta_{2}}{\excf(\rpol)}
        \Big[
        1
        -
        \frac{\excf'(\rpol)\rpol}{\excf(\rpol)}
        \Big]
        \Big\}
        \,.
    \end{split}
  \end{equation}
  The claim follows from the inequality $\auxf'(\rpol)\ge 0$, which can be proved, for $\rpol$ as in \eqref{eq:sss_comb_n}, on applying next \eqref{eq:sss_excf_n}.
\end{proof}

\section{Self similar supersolutions}
\label{s:sss}

We assume here $\varGamma\ge 1$ and
\begin{equation}
  \label{eq:sss_ups_t0r0}
  \excf(\rpol_{0})
  <
  \log t_{0}
  \le
  \varGamma
  \log t_{0}
  \,,
\end{equation}
so that the support of $\ups(t)$ contains the ball $\rpol\le\rpol_{0}$ for all $t\ge 0$; this follows from the fact that $\rat$, and therefore $\tar$, is an increasing function as proved in Lemma~\ref{l:sss_rat}. By the same token, the support of $\ups(t)$ is the ball of radius $\excf^{(-1)}(\tau)$.

For the reader's convenience we recall that we assume $\alpha_{2}<p$, and note that $\rpol_{0}$ will be chosen as in \eqref{eq:sss_ups_r0}, $C_{*}$ as in \eqref{eq:sss_ups_C}, $\nu_{0}$ as in \eqref{eq:sss_ups_c1}, $\varGamma$ as in \eqref{eq:sss_ups_Gamma0}, while $t_{0}$ is required to satisfy \eqref{eq:sss_ups_t0r0} and \eqref{eq:sss_ups_ineqs5}.

1) Case $\rpol\ge \rpol_{0}$.
With the notation above, we write
\begin{equation}
  \label{eq:sss_sup_2}
  \ups(x,t)
  =
  \frac{C_{*}\brt(\rpol,\tau)^{\frac{p-1}{p+m-3}}}{(t+t_{0})^{\frac{1}{p+m-3}}}
  \,,
  \quad
  \brt(\rpol,\tau)
  :=
  \big[
  \tar(\tau)^{\frac{1}{p-1}}
  -
  \rat(\rpol)^{\frac{1}{p-1}}
  \big]_{+}
  \,.
\end{equation}
Clearly we may restrict in our calculations to the open set where $\brt>0$. Thus we compute
\begin{equation}
  \label{eq:sss_ups_pow}
  \ups(x,t)^{m-1}
  =
  \frac{C_{*}^{m-1}\brt(\rpol,\tau)^{\frac{(m-1)(p-1)}{p+m-3}}}{(t+t_{0})^{\frac{m-1}{p+m-3}}}
  \,,
\end{equation}
and
\begin{equation}
  \label{eq:sss_ups_rder}
  \ups_{\rpol}(x,t)
  =
  -
  \frac{1}{p+m-3}
  \frac{C_{*}\brt(\rpol,\tau)^{\frac{p-1}{p+m-3}-1}}{(t+t_{0})^{\frac{1}{p+m-3}}}
  \rat(\rpol)^{\frac{1}{p-1}-1}
  \rat'(\rpol)
  \,.
\end{equation}
Thus, on recalling $\rat'(\rpol)\ge 0$, we obtain
\begin{equation}
  \label{eq:sss_ups_diff}
  \begin{split}
    I_{1}
    &:=
      \ups^{m-1}
      \abs{\ups_{\rpol}}^{p-2}
      \ups_{\rpol}
    \\
    &=
      -
      \frac{
      C_{*}^{p+m-2}
      }{
      (p+m-3)^{p-1}
      }
      \,
      \frac{
      \brt(\rpol,\tau)^{\frac{p-1}{p+m-3}}
      }{
      (t+t_{0})^{\frac{p+m-2}{p+m-3}}
      }
      \rat(\rpol)^{2-p}
      \rat'(\rpol)^{p-1}
      \,.
  \end{split}
\end{equation}
We have for the first term on the right hand side of \eqref{eq:sss_rad}
\begin{equation}
  \label{eq:sss_ups_diff2}
  \pder{I_{1}}{\rpol}
  =
  -
  \frac{
    C_{*}^{p+m-2}
  }{
    (p+m-3)^{p-1}
  }
  \,
  \frac{
    1
  }{
    (t+t_{0})^{\frac{p+m-2}{p+m-3}}
  }
  I_{2}
  \,,
\end{equation}
where
\begin{equation*}
  \begin{split}
    I_{2}
    &=
      \pder{}{\rpol}
      \big(
      \brt(\rpol,\tau)^{\frac{p-1}{p+m-3}}
      \big)
      \rat(\rpol)^{2-p}
      \rat'(\rpol)^{p-1}
    \\
    &\quad+
      \brt(\rpol,\tau)^{\frac{p-1}{p+m-3}}
      \pder{}{\rpol}
      \big(
      \rat(\rpol)^{2-p}
      \rat'(\rpol)^{p-1}
      \big)
      \,.
  \end{split}
\end{equation*}
We calculate by direct differentiation
\begin{equation}
  \label{eq:sss_ups_diff3}
  \begin{split}
    I_{2}
    &=
      \brt(\rpol,\tau)^{\frac{p-1}{p+m-3}-1}
      \Big\{
      -
      \frac{1}{p+m-3}
      \rat(\rpol)^{-\frac{p(p-2)}{p-1}}
      \rat'(\rpol)^{p}
    \\
    &\quad+
      \brt(\rpol,\tau)
      \big[
      (2-p)
      \rat(\rpol)^{1-p}
      \rat'(\rpol)^{p}
    \\
    &\qquad+
      (p-1)
      \rat(\rpol)^{2-p}
      \rat'(\rpol)^{p-2}
      \rat''(\rpol)
      \big]
      \Big\}
      =:
      \brt(\rpol,\tau)^{\frac{p-1}{p+m-3}-1}
      I_{3}
      \,.
  \end{split}
\end{equation}

Then for the time derivative we calculate
\begin{equation}
  \label{eq:sss_ups_tder}
  \begin{split}
    \ups_{t}(x,t)
    &=
      -
      \frac{1}{p+m-3}
      \frac{
      C_{*}\brt(\rpol,\tau)^{\frac{p-1}{p+m-3}}
      }{
      (t+t_{0})^{\frac{p+m-2}{p+m-3}}
      }
      \\
    &\quad
      +
      \frac{p-1}{p+m-3}
      \frac{
      C_{*}\brt(\rpol,\tau)^{\frac{2-m}{p+m-3}}
      }{
      (t+t_{0})^{\frac{1}{p+m-3}}
      }
      \der{}{t}
      \big(\tar(\tau)^{\frac{1}{p-1}}\big)
      \,.
  \end{split}
\end{equation}
Then we note that
\begin{equation}
  \label{eq:sss_ups_ttar}
  \begin{split}
    \der{}{t}
    \big(\tar(\tau)^{\frac{1}{p-1}}\big)
    &=
      \der{\tau}{t}
      \der{}{\tau}
      \big(\tar(\tau)^{\frac{1}{p-1}}\big)
    \\
    &=
      \frac{\varGamma}{t+t_{0}}
      \frac{1}{p-1}
      \tar(\tau)^{\frac{2-p}{p-1}}
      \frac{\excf^{(-1)}(\tau)^{p-1}}{\tau^{2}}
    \\
    &\quad\times
      \big[
      p\tau
      \der{}{\tau}\big(\excf^{(-1)}(\tau)\big)
      -
      \excf^{(-1)}(\tau)
      \big]
      \,.
  \end{split}
\end{equation}
Denote next $y=\excf^{(-1)}(\tau)$; then from the elementary formula for the derivative of the inverse function we get
\begin{equation}
  \label{eq:sss_ups_ttar2}
  \frac{\excf^{(-1)}(\tau)^{p-1}}{\tau^{2}}
      \big[
      p\tau
      \der{}{\tau}\big(\excf^{(-1)}(\tau)\big)
      -
      \excf^{(-1)}(\tau)
      \big]
      =
      \frac{y^{p-1}}{\tau^{2}}
      \Big[
      \frac{p\excf(y)}{\excf'(y)}
      -
      y
      \Big]
      \,,
\end{equation}
and, from \eqref{eq:sss_excf_nn},
\begin{equation}
  \label{eq:sss_ups_ttar3}
  y^{p}
  \Big(
  \frac{p}{\alpha_{1}}
  -
  1
  \Big)
  \ge
  y^{p-1}
  \Big[
  \frac{p\excf(y)}{\excf'(y)}
  -
  y
  \Big]
  \ge
  y^{p}
  \Big(
  \frac{p}{\alpha_{2}}
  -
  1
  \Big)
  \,.
\end{equation}
On combining \eqref{eq:sss_ups_ttar}--\eqref{eq:sss_ups_ttar3} we obtain, after using again the definition of $\tar(\tau)$,
\begin{align}
  \label{eq:sss_ups_ttarb}
  \der{}{t}
  \big(\tar(\tau)^{\frac{1}{p-1}}\big)
  &\ge
    \frac{p-\alpha_{2}}{\alpha_{2}(p-1)}
    \frac{\varGamma}{t+t_{0}}
    \frac{1}{\tau}
    \tar(\tau)^{\frac{1}{p-1}}
    \,,
  \\
  \label{eq:sss_ups_ttara}
  \der{}{t}
  \big(\tar(\tau)^{\frac{1}{p-1}}\big)
  &\le
    \frac{p-\alpha_{1}}{\alpha_{1}(p-1)}
    \frac{\varGamma}{t+t_{0}}
    \frac{1}{\tau}
    \tar(\tau)^{\frac{1}{p-1}}
    \,.
\end{align}

In order for $\ups$ to be a supersolution, we need to show
\begin{equation}
  \label{eq:sss_ups_ineq}
  \pder{\ups}{t}
  \ge
  \pder{I_{1}}{\rpol}
  +
  I_{1}
  \Big(
  \frac{N-1}{\rpol}
  +
  \exw'
  \Big)
  \,,
  \qquad
  \rpol>\rpol_{0}
  \,.
\end{equation}
We read the three terms appearing in \eqref{eq:sss_ups_ineq}, in order, in \eqref{eq:sss_ups_tder}, in \eqref{eq:sss_ups_diff2}--\eqref{eq:sss_ups_diff3}, in \eqref{eq:sss_ups_diff}. We note that all these quantities share the common factor
\begin{equation*}
  \frac{
    C_{*}
    \brt(\rpol,\tau)^{\frac{p-1}{p+m-3}-1}
  }{
    (p+m-3)(t+t_{0})^{\frac{p+m-2}{p+m-3}}
  }
  \,.
\end{equation*}
Upon dividing \eqref{eq:sss_ups_ineq} by this factor, we obtain its
equivalent form
\begin{equation}
  \label{eq:sss_ups_ineq2}
  \begin{split}
    K_{0}&:=
      (p-1)
      (t+t_{0})
      \der{}{t}
      \big(
      \tar(\tau)^{\frac{1}{p-1}}
      \big)
      \ge
      \brt(\rpol,\tau)
      \\
    &\quad
      -
      \frac{
      C_{*}^{p+m-3}\brt(\rpol,\tau)
      }{
      (p+m-3)^{p-2}
      }
      \rat(\rpol)^{2-p}
      \rat'(\rpol)^{p-1}
      \Big(
      \frac{N-1}{\rpol}
      +
      \exw'(\rpol)
      \Big)
      \\
    &\quad
      -
      \frac{
      C_{*}^{p+m-3} I_{3}
      }{
      (p+m-3)^{p-2}
      }
      =:
      \brt(\rpol,\tau)
      +
      K_{1}
      +
      K_{2}
      \,,
  \end{split}
\end{equation}
which we may rewrite as
\begin{equation*}
  K_{0}
  -
  K_{1}
  \ge
  A
  +
  K_{2}
  \,.
\end{equation*}
Let us start to estimate the quantities of interest.
\\
We have from \eqref{eq:sss_ups_ttarb}
\begin{equation}
  \label{eq:sss_ups_k0}
  K_{0}
  \ge
  \frac{p-\alpha_{2}}{\alpha_{2}}
  \frac{1}{\log(t+t_{0})}
  \tar(\tau)^{\frac{1}{p-1}}
  \,.
\end{equation}
Then, from \eqref{eq:sss_rat_n} we infer, on using also the definition of $\rat(\rpol)$,
\begin{equation}
  \label{eq:sss_ups_k1}
  \begin{split}
    -K_{1}
    &\ge
      \frac{
      C_{*}^{p+m-3}\brt(\rpol,\tau)
      }{
      (p+m-3)^{p-2}
      }
      (p-\alpha_{2})^{p-1}
      \frac{\rpol}{\excf(\rpol)}
      \Big(
      \frac{N-1}{\rpol}
      +
      \exw'(\rpol)
      \Big)
      \\
    &\ge
      \frac{
      C_{*}^{p+m-3}\brt(\rpol,\tau)
      }{
      (p+m-3)^{p-2}
      }
      (p-\alpha_{2})^{p-1}
      \Big(
      \frac{N-1}{\excf(\rpol)}
      +
      \alpha_{1}^{2}
      \Big)
      \\
    &=
      \mu_{1}
      C_{*}^{p+m-3}
      \brt(\rpol,\tau)
      \Big(
      \frac{N-1}{\excf(\rpol)}
      +
      \alpha_{1}^{2}
      \Big)
      \,,
  \end{split}
\end{equation}
where we used also \eqref{eq:sss_excf_nn}, and set
\begin{equation}
  \label{eq:sss_ups_mu1}
  \mu_{1}
  =
  \frac{(p-\alpha_{2})^{p-1}}{(p+m-3)^{p-2}}
  \,.
\end{equation}
Next we estimate from above $K_{2}=:h_{1}+h_{2}$, where (cf \eqref{eq:sss_ups_diff3})
\begin{equation}
  \label{eq:sss_ups_h1}
  \begin{split}
    h_{1}
    &:=
      \frac{
      C_{*}^{p+m-3}
      }{
      (p+m-3)^{p-2}
      }
      \frac{1}{p+m-3}
      \rat(\rpol)^{-\frac{p(p-2)}{p-1}}
      \rat'(\rpol)^{p}
    \\
    &\le
      \frac{
      C_{*}^{p+m-3}
      (p-\alpha_{1})^{p}
      }{
      (p+m-3)^{p-1}
      }
      \rat(\rpol)^{\frac{1}{p-1}}
      \frac{1}{\excf(\rpol)}
    \\
    &=
      \mu_{2}
      C_{*}^{p+m-3}
      \rat(\rpol)^{\frac{1}{p-1}}
      \frac{1}{\excf(\rpol)}
      \,;
  \end{split}
\end{equation}
we used here \eqref{eq:sss_rat_n} and the definition of $\rat(\rpol)$, setting also
\begin{equation}
  \label{eq:sss_ups_mu2}
  \mu_{2}
  =
  \frac{(p-\alpha_{1})^{p}}{(p+m-3)^{p-1}}
  \,.
\end{equation}
Then we calculate (see again \eqref{eq:sss_ups_diff3})
\begin{equation}
  \label{eq:sss_ups_h2}
  \begin{split}
    h_{2}
    &:=
      \frac{
      C_{*}^{p+m-3}
      }{
      (p+m-3)^{p-2}
      }
      \brt(\rpol,\tau)
      \big[
      (p-2)
      \rat(\rpol)^{1-p}
      \rat'(\rpol)^{p}
      \\
    &\quad
      -
      (p-1)
      \rat(\rpol)^{2-p}
      \rat'(\rpol)^{p-2}
      \rat''(\rpol)
      \big]
      \\
    &\le
      \frac{
      C_{*}^{p+m-3}
      \brt(\rpol,\tau)
      }{
      (p+m-3)^{p-2}
      }
      \big[
      (p-2)_{+}
      (p-\alpha_{1})^{p}
      +
      d
      \big]
      \frac{1}{\excf(\rpol)}
      \\
    &=
      \mu_{3}
      C_{*}^{p+m-3}
      \frac{\brt(\rpol,\tau)}{\excf(\rpol)}
      \,,
  \end{split}
\end{equation}
where we used \eqref{eq:sss_rat_n}, \eqref{eq:sss_rat_nn}, and set
\begin{gather}
  \label{eq:sss_ups_d}
  d
  =
  (c_{1})_{-}
  (p-1)
  (p-\alpha_{i})^{p-2}
  \,,
  \quad
  \text{$i=1$ if $p\ge 2$, $i=2$ if $p<2$;}
  \\
  \label{eq:sss_ups_mu3}
  \mu_{3}
  =
  \frac{
    (p-2)_{+}
    (p-\alpha_{1})^{p}
    +
    d
  }{
    (p+m-3)^{p-2}
  }
  \,.
\end{gather}
Namely, $h_{2}$ does not give any contribution if $p\le2$ and $c_{1}\ge 0$. 

We collect \eqref{eq:sss_ups_k0}--\eqref{eq:sss_ups_h2} and see that \eqref{eq:sss_ups_ineq2} is implied by (leaving on the right hand side of \eqref{eq:sss_ups_ineq3} only the contribution of $h_{1}$)
\begin{equation}
  \label{eq:sss_ups_ineq3}
  \begin{split}
    &C_{*}^{p+m-3}
      \brt(\rpol,\tau)
      \Big(
      \frac{\mu_{1}(N-1)-\mu_{3}}{\excf(\rpol)}
      +
      \mu_{1}
      \alpha_{1}^{2}
      -
      C_{*}^{-(p+m-3)}
      \Big)
    \\
    &\quad+
      \frac{p-\alpha_{2}}{\alpha_{2}}
      \frac{1}{\log(t+t_{0})}
      \tar(\tau)^{\frac{1}{p-1}}
     \\
    &\ge
      \mu_{2}
      C_{*}^{p+m-3}
      \rat(\rpol)^{\frac{1}{p-1}}
      \frac{1}{\excf(\rpol)}
      \,.    
  \end{split}
\end{equation}
The constants $\mu_{1}>0$, $\mu_{2}>0$ and $\mu_{3}\ge 0$ are defined in \eqref{eq:sss_ups_mu1}, \eqref{eq:sss_ups_mu2} and \eqref{eq:sss_ups_d}--\eqref{eq:sss_ups_mu3}, respectively, and depend only on $p$, $m$, $\alpha_{1}$, $\alpha_{2}$.
We select $C_{*}$ and $\rpol_{0}>0$ large enough to have
\begin{gather}
  \label{eq:sss_ups_r0}
  \excf(\rpol_{0})
  \ge
  \max\Big\{
  \frac{4\mu_{3}}{\mu_{1}\alpha_{1}^{2}}
  \,,
  \frac{2\mu_{2}p(\alpha_{2}-1)_{+}}{\mu_{1}\alpha_{1}^{2}(p-\alpha_{2})}
  \Big\}
  \,;
  \\
  \label{eq:sss_ups_C}
  C_{*}^{-(p+m-3)}
  \le
  \min\Big\{
  \frac{\mu_{1}\alpha_{1}^{2}}{4}
  \,,
  \frac{(N-1)(p-\alpha_{2})^{p-1}}{(p+m-3)^{p-2}\excf(\rpol_{0})}
  \Big\}
  \,.
\end{gather}
Under the assumptions \eqref{eq:sss_ups_r0}--\eqref{eq:sss_ups_C} (actually we exploit here only the first terms in the $\max$ functions), \eqref{eq:sss_ups_ineq3} is implied by
\begin{equation}
  \label{eq:sss_ups_ineq4}
  \begin{split}
    & \mu_{1}\alpha_{1}^{2}
      \brt(\rpol,\tau)
      +
      \frac{2(p-\alpha_{2})}{C_{*}^{p+m-3}\alpha_{2}}
      \frac{1}{\log(t+t_{0})}
      \tar(\tau)^{\frac{1}{p-1}}
     \\
    &\ge
      2\mu_{2}
      \rat(\rpol)^{\frac{1}{p-1}}
      \frac{1}{\excf(\rpol)}
      \,.    
  \end{split}
\end{equation}
Recalling the definitions of $\brt(\rpol,\tau)$ and of $\tau$, we see that \eqref{eq:sss_ups_ineq4} is equivalent to
\begin{equation}
  \label{eq:sss_ups_ineq5}
  \begin{split}
    & \tar(\tau)^{\frac{1}{p-1}}
      \Big[
      \mu_{1}\alpha_{1}^{2}
      +
      \varGamma
      \frac{2(p-\alpha_{2})}{C_{*}^{p+m-3}\alpha_{2}}
      \frac{1}{\tau}
      \Big]
     \\
    &\ge
      \rat(\rpol)^{\frac{1}{p-1}}
      \Big[
      \mu_{1}\alpha_{1}^{2}
      +
      2\mu_{2}
      \frac{1}{\excf(\rpol)}
      \Big]
      =:
      \auxf(\rpol)
      \,.    
  \end{split}
\end{equation}
We select here
\begin{equation}
  \label{eq:sss_ups_Gamma0}
  \varGamma
  \ge
  \max\Big(
  \frac{\mu_{2}\alpha_{2}C_{*}^{p+m-3}}{p-\alpha_{2}}
  ,
  1
  \Big)
  \,,
\end{equation}
so that (actually by exploiting only the first term in the $\max$ function in \eqref{eq:sss_ups_Gamma0}) clearly the left hand side of \eqref{eq:sss_ups_ineq5} is greater than or equal to $\auxf(\excf^{(-1)}(\tau))$, and the inequality \eqref{eq:sss_ups_ineq5} follows from
\begin{equation}
  \label{eq:sss_ups_ineq5a}
  \auxf(\excf^{(-1)}(\tau))
  \ge
  \auxf(\rpol)
  \,.
\end{equation}
However, if $\brt(\rpol,\tau)>0$, we are in the region
$\tar(\tau)>\rat(\rpol)$, implying $\rpol<\excf^{(-1)}(\tau)$. Then, on invoking
Lemma~\ref{l:sss_comb} with
\begin{equation}
  \label{eq:sss_ups_deltas}
  \delta_{1}
  =
  \mu_{1}\alpha_{1}^{2}
  \,,
  \qquad
  \delta_{2}
  =
  2\mu_{2}
  \,,
\end{equation}
\eqref{eq:sss_ups_ineq5a} follows from $\rpol_{0}\le\rpol\le \excf^{(-1)}(\tau)$ and from our choice of $\rpol_{0}$ (such that $\rpol_{0}\ge \rpol_{1}$ as in \eqref{eq:sss_comb_n}; we exploit here only the second term in the $\max$ function in \eqref{eq:sss_ups_r0}).

2) Case $\rpol<\rpol_{0}$.
With the notation already introduced, we write for $\rpol<\rpol_{0}$
\begin{equation}
  \label{eq:sss_sup_3}
  \ups(x,t)
  =
  \frac{C_{*}\trb(\rpol,\tau)^{\frac{p-1}{p+m-3}}}{(t+t_{0})^{\frac{1}{p+m-3}}}
  \,,
  \quad
  \trb(\rpol,\tau)
  :=
  \tar(\tau)^{\frac{1}{p-1}}
  -
  \ram(\rpol)
  \,,
\end{equation}
and recall that $\trb(\rpol_{0},\tau)=\brt(\rpol_{0},\tau)$. Then the continuity of
$\ups_{\rpol}$ at $\rpol=\rpol_{0}$ amounts to selecting $\nu_{0}$ so that
\begin{equation*}
  \begin{split}
    \nu_{0}
    \frac{p}{p-1}
    \frac{\rpol_{0}^{\frac{1}{p-1}}}{\excf(\rpol_{0})^{\frac{1}{p-1}}}
    &=
      \ram'(\rpol_{0})
      =
      \der{}{\rpol}
      (\rat^{\frac{1}{p-1}})
      (\rpol_{0})
      =
      \frac{1}{p-1}
      \rat(\rpol_{0})^{\frac{1}{p-1}-1}
      \rat'(\rpol_{0})
    \\
    &=
      \frac{\rat(\rpol_{0})^{\frac{1}{p-1}}}{(p-1)\rpol_{0}}
      \Big[
      p
      -
      \frac{\rpol_{0}\excf'(\rpol_{0})}{\excf(\rpol_{0})}
      \Big]
      \,,
  \end{split}
\end{equation*}
that is
\begin{equation}
  \label{eq:sss_ups_c1}
  0<
  1
  -
  \frac{\alpha_{2}}{p}
  \le
  \nu_{0}
  =
  1
  -
  \frac{\rpol_{0}\excf'(\rpol_{0})}{p\excf(\rpol_{0})}
  \le
  1
  -
  \frac{\alpha_{1}}{p}
  <
  1
  \,,
\end{equation}
according to \eqref{eq:sss_excf_nn} and our assumption $\alpha_{2}<p$.

We work in the set $\trb>0$. Then we compute
\begin{equation}
  \label{eq:sss_ups_diffs}
  \begin{split}
    L_{1}
    &:=
      \ups^{m-1}
      \abs{\ups_{\rpol}}^{p-2}
      \ups_{\rpol}
    \\
    &=
      -
      \frac{
      C_{*}^{p+m-2}
      (\nu_{0}p)^{p-1}
      }{
      (p+m-3)^{p-1}
      }
      \,
      \frac{
      \trb(\rpol,\tau)^{\frac{p-1}{p+m-3}}
      }{
      (t+t_{0})^{\frac{p+m-2}{p+m-3}}
      }
      \frac{\rpol}{\excf(\rpol_{0})}
      \,.
  \end{split}
\end{equation}
We recall that, in order for $\ups$ to be a supersolution, we need to check (see also \eqref{eq:sss_ups_ineq})
\begin{equation}
  \label{eq:sss_ups_ineqs}
  \pder{\ups}{t}
  \ge
  \pder{L_{1}}{\rpol}
  +
  L_{1}
  \Big(
  \frac{N-1}{\rpol}
  +
  \exw'(\rpol)
  \Big)
  \,,
  \qquad
  \rpol<\rpol_{0}
  \,.
\end{equation}
The time derivative $\ups_{t}$ is given by \eqref{eq:sss_ups_tder},
when we formally substitute $\brt(\rpol,\tau)$ with $\trb(\rpol,\tau)$.
Next we calculate
\begin{equation}
  \label{eq:sss_ups_diffs2}
  \begin{split}
    \pder{L_{1}}{\rpol}
    &=
      \frac{
      C_{*}^{p+m-2}
      (\nu_{0}p)^{p-1}
      }{
      (p+m-3)^{p-1}
      }
      \,
      \frac{
      \trb(\rpol,\tau)^{\frac{p-1}{p+m-3}-1}
      }{
      (t+t_{0})^{\frac{p+m-2}{p+m-3}}
      \excf(\rpol_{0})
      }
    \\
    &\quad\times
      \Big[
      \frac{p\nu_{0}}{p+m-3}
      \frac{\rpol^{\frac{p}{p-1}}}{\excf(\rpol_{0})^{\frac{1}{p-1}}}
      -
      \trb(\rpol,\tau)
      \Big]
      \,.
  \end{split}
\end{equation}
Upon dividing both sides of \eqref{eq:sss_ups_ineqs} by
\begin{equation*}
  \frac{C_{*}}{p+m-3}
  \,
  \frac{\trb(\rpol,\tau)^{\frac{p-1}{p+m-3}-1}}{(t+t_{0})^{\frac{p+m-2}{p+m-3}}}
  \,,
\end{equation*}
we obtain the equivalent version of \eqref{eq:sss_ups_ineqs} (here $K_{0}$ is as in \eqref{eq:sss_ups_ineq2})
\begin{equation}
  \label{eq:sss_ups_ineqs2}
  \begin{split}
    K_{0}&=
           (p-1)
           (t+t_{0})
           \der{}{t}
           \big(
           \tar(\tau)^{\frac{1}{p-1}}
           \big)
           \ge
           \trb(\rpol,\tau)
    \\
         &\quad+
           \frac{C_{*}^{p+m-3}(\nu_{0}p)^{p-1}}{(p+m-3)^{p-2}}
           \frac{1}{\excf(\rpol_{0})}
           \Big[
           \frac{p\nu_{0}}{p+m-3}
           \frac{\rpol^{\frac{p}{p-1}}}{\excf(\rpol_{0})^{\frac{1}{p-1}}}
           -
           \trb(\rpol,\tau)
           \Big]
    \\
         &\quad
           -
           \Big(
           \frac{N-1}{\rpol}
           +
           \exw'(\rpol)
           \Big)
           \frac{C_{*}^{p+m-3}(\nu_{0}p)^{p-1}}{(p+m-3)^{p-2}}
           \frac{\trb(\rpol,\tau)\rpol}{\excf(\rpol_{0})}
           =:
           K_{3}
           \,.
  \end{split}
\end{equation}
On recalling \eqref{eq:sss_ups_k0}, we see that $K_{0}>0$, so that we need only show
\begin{equation}
  \label{eq:sss_ups_ineqs3}
      0
      \ge
      K_{3}
      \,.
\end{equation}
Clearly, this is implied by
\begin{equation}
  \label{eq:sss_ups_ineqs4}
  \begin{split}
    & \Big(
      \frac{(N-1)C_{*}^{p+m-3}(\nu_{0}p)^{p-1}}{(p+m-3)^{p-2}\excf(\rpol_{0})}
      -
      1
      \Big)
      \trb(\rpol,\tau)
      \ge
    \\
    &\quad
      \frac{C_{*}^{p+m-3}(\nu_{0}p)^{p-1}}{(p+m-3)^{p-2}}
      \frac{1}{\excf(\rpol_{0})}
      \Big[
      \frac{p\nu_{0}}{p+m-3}
      \frac{\rpol^{\frac{p}{p-1}}}{\excf(\rpol_{0})^{\frac{1}{p-1}}}
      -
      \trb(\rpol,\tau)
      \Big]
      \,.
  \end{split}
\end{equation}
The left hand side of \eqref{eq:sss_ups_ineqs4} is nonnegative, owing to our choice of $C_{*}$ and to \eqref{eq:sss_ups_c1} (we use the second term in the $\max$ function in \eqref{eq:sss_ups_C} here). Thus we only need to show that the right hand side there is nonpositive for $\rpol<\rpol_{0}$. According to the definitions of $\tau$ and of $\trb(\rpol,\tau)$ this in turn follows from
\begin{equation}
  \label{eq:sss_ups_ineqs5}
  \begin{split}
    &\frac{p\nu_{0}}{p+m-3}
      \frac{\rpol_{0}^{\frac{p}{p-1}}}{\excf(\rpol_{0})^{\frac{1}{p-1}}}
      +
      \ram(\rpol_{0})
      \le
      \tar(\log t_{0})^{\frac{1}{p-1}}
    \\
    &\quad\le
      \tar(\varGamma \log t_{0})^{\frac{1}{p-1}}
      \le
      \tar(\tau)^{\frac{1}{p-1}}
      \,,
  \end{split}
\end{equation}
where the last two inequalities follow trivially from the increasing character of $\tar(\tau)$ and $\varGamma\ge 1$. But since $\rpol_{0}$ has been fixed, this amounts simply to choosing $t_{0}=t_{0}(\rpol_{0})$ so that both \eqref{eq:sss_ups_t0r0} and \eqref{eq:sss_ups_ineqs5} are satisfied; actually it is easily seen that \eqref{eq:sss_ups_t0r0} follows from \eqref{eq:sss_ups_ineqs5}. Note that $r_{0}$, $C_{*}$ and $t_{0}$ are independent of $\varGamma$.

3) Finally, the regularity required in Definition~\ref{d:weaksol} can be easily shown by direct inspection and from the explicit form of $\ups_{t}$ found above.

We conclude this Section with the following result, which is instrumental in the proof of Theorem~\ref{t:sup_sup}.

\begin{lemma}
  \label{l:sss_comp}
  Let $L>0$, $M>0$ be given positive numbers. Then it is possible to select $\rpol_{0}$, $C_{*}$, $t_{0}$, $\varGamma$ so that $\ups$ is a supersolution and 
  \begin{equation}
    \label{eq:sss_comp_n}
    \ups(x,0)
    \ge
    M
    \,,
    \qquad
    \abs{x}\le L
    \,.
  \end{equation}
\end{lemma}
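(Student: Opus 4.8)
The plan is to exploit the freedom in the constant $C_*$ while keeping the supersolution constraints intact. Recall that by the analysis of Section~\ref{s:sss} the function $\ups$ is a supersolution provided $\rpol_0$ (equivalently $\excf(\rpol_0)$) is large enough (condition \eqref{eq:sss_ups_r0}), $C_*$ is large enough (condition \eqref{eq:sss_ups_C}), $\varGamma$ is large enough (condition \eqref{eq:sss_ups_Gamma0}), and $t_0$ then satisfies \eqref{eq:sss_ups_ineqs5} (which also implies \eqref{eq:sss_ups_t0r0}). The key observation is that \eqref{eq:sss_ups_r0} fixes a threshold for $\excf(\rpol_0)$ depending only on $p,m,\alpha_1,\alpha_2$, and then \eqref{eq:sss_ups_C} is an \emph{upper} bound on $C_*^{-(p+m-3)}$, i.e.\ a \emph{lower} bound on $C_*$, again depending only on the structural parameters and on $\excf(\rpol_0)$. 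So one may first fix $\rpol_0$, then choose $C_*$ as large as one pleases (only bounded below), and finally choose $\varGamma$ and $t_0$ accordingly.

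First I would fix $\rpol_0$ so that \eqref{eq:sss_ups_r0} holds, and additionally so that $L\le\rpol_0$; this is harmless since enlarging $\rpol_0$ only strengthens \eqref{eq:sss_ups_r0}. Then for $\abs{x}=\rpol\le L\le\rpol_0$ we are in the regime \eqref{eq:sss_sup_3}, so
\begin{equation}
  \label{eq:sss_comp_lb}
  \ups(x,0)
  =
  \frac{C_{*}}{t_{0}^{\frac{1}{p+m-3}}}
  \Big[
  \tar(\varGamma\log t_{0})^{\frac{1}{p-1}}
  -
  \ram(\rpol)
  \Big]^{\frac{p-1}{p+m-3}}_{+}
  \ge
  \frac{C_{*}}{t_{0}^{\frac{1}{p+m-3}}}
  \Big[
  \tar(\varGamma\log t_{0})^{\frac{1}{p-1}}
  -
  \ram(\rpol_{0})
  \Big]^{\frac{p-1}{p+m-3}}_{+}
  \,,
\end{equation}
using that $\ram$ is increasing. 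By \eqref{eq:sss_ups_ineqs5} the bracket in the last expression is bounded below by a fixed positive multiple of $\tar(\varGamma\log t_0)^{1/(p-1)}\ge\tar(\log t_0)^{1/(p-1)}$, which by the definition $\tar(\tau)=\excf^{(-1)}(\tau)^p/\tau$ and \eqref{eq:sss_powerlike_gt_excf} grows (slowly) as $t_0\to\infty$ but certainly stays bounded away from zero once $t_0$ exceeds the threshold set by \eqref{eq:sss_ups_ineqs5}. Hence \eqref{eq:sss_comp_lb} gives $\ups(x,0)\ge c(\rpol_0,\varGamma,t_0)\,C_*$ for a positive constant $c$ independent of $C_*$.

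It remains to choose the constants in the right order so as to beat $M$. I would proceed as follows: fix $\rpol_0$ as above; then pick $t_0$ satisfying \eqref{eq:sss_ups_ineqs5} with $\varGamma=1$ (note the remark at the end of Section~\ref{s:sss} that $\rpol_0$, $C_*$, $t_0$ are independent of $\varGamma$, and that enlarging $\varGamma$ preserves \eqref{eq:sss_ups_ineqs5}); this determines the positive constant $c=c(\rpol_0,t_0)$ in the bound $\ups(x,0)\ge cC_*$. Now choose $C_*$ large enough that simultaneously \eqref{eq:sss_ups_C} holds and $cC_*\ge M$; this is possible because \eqref{eq:sss_ups_C} only bounds $C_*$ from below. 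Finally select $\varGamma$ so that \eqref{eq:sss_ups_Gamma0} holds for this $C_*$. With these choices $\ups$ is a supersolution by Section~\ref{s:sss} and \eqref{eq:sss_comp_n} holds. The only mild subtlety — the main point to get right — is the order of quantifiers: $\varGamma$ depends on $C_*$ through \eqref{eq:sss_ups_Gamma0}, so $C_*$ must be chosen before $\varGamma$; but the lower bound $cC_*$ on $\ups(x,0)$ must not deteriorate when $\varGamma$ is subsequently enlarged, which is exactly guaranteed by the monotonicity of $\tar$ in its argument and the fact that enlarging $\varGamma$ only increases $\tar(\varGamma\log t_0)$. Thus no circularity arises, and the lemma follows.
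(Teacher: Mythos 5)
Your proposal is correct, but it chooses a different ``knob'' to turn than the paper does. The paper first fixes $\rpol_{0}\ge L$ satisfying \eqref{eq:sss_ups_r0}, then $C_{*}$ by \eqref{eq:sss_ups_C} and $t_{0}$ by \eqref{eq:sss_ups_ineqs5}, and finally drives $\ups(x,0)$ above $M$ by taking $\varGamma$ large: in \eqref{eq:sss_comp_i} it bounds $\tar(\varGamma\log t_{0})^{1/(p-1)}\ge(\varGamma\log t_{0})^{(p-\alpha_{2})/(\alpha_{2}(p-1))}$ using \eqref{eq:sss_powerlike_gt_excf}, and since this diverges in $\varGamma$ while $\rpol_{0},C_{*},t_{0}$ are already fixed independently of $\varGamma$, the bound $\ge M$ follows. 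You instead keep $\varGamma$ at its minimal admissible value and enlarge $C_{*}$, observing correctly that (i) \eqref{eq:sss_ups_C} is only a lower bound on $C_{*}$, so $C_{*}$ is free to grow, (ii) by \eqref{eq:sss_ups_ineqs5} the bracket $\bigl[\tar(\varGamma\log t_{0})^{1/(p-1)}-\ram(\rpol_{0})\bigr]$ is bounded below by a fixed positive quantity uniformly in $\varGamma\ge 1$ (monotonicity of $\tar$), and (iii) $\varGamma$ satisfying \eqref{eq:sss_ups_Gamma0} can be chosen last, after $C_{*}$, without spoiling the bound because enlarging $\varGamma$ only increases $\ups(x,0)$. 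Both routes are sound; yours is linear in the free parameter ($\ups(x,0)\gtrsim C_{*}$) and avoids the quantitative lower bound on $\tar$, at the price of the extra quantifier-order discussion, whereas the paper's is shorter because it matches the order in which the constants were introduced in Section~\ref{s:sss} and only has to note that $\tar$ grows in $\varGamma$.
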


\begin{proof}
  First we choose above $\rpol_{0}$ satisfying both
  \eqref{eq:sss_ups_r0} and $\rpol_{0}\ge L$. Then we choose $C_{*}$
  as in \eqref{eq:sss_ups_C}, and $t_{0}$ according to
  \eqref{eq:sss_ups_ineqs5}. Next we remark that, owing to
  \eqref{eq:sss_powerlike_gt_excf}, we have for $\rpol\le \rpol_{0}$
  \begin{equation}
    \label{eq:sss_comp_i}
    \begin{split}
      \ups(x,0)
      &\ge
        C_{*}
        t_{0}^{-\frac{1}{p+m-3}}
        \Big[
        \tar(\varGamma \log t_{0})^{\frac{1}{p-1}}
        -
        \rat(\rpol_{0})^{\frac{1}{p-1}}
        \Big]^{\frac{p-1}{p+m-3}}
        \\
      &\ge
        C_{*}
        t_{0}^{-\frac{1}{p+m-3}}
        \Big[
        (\varGamma \log t_{0})^{\frac{p-\alpha_{2}}{\alpha_{2}(p-1)}}
        -
        \rat(\rpol_{0})^{\frac{1}{p-1}}
        \Big]^{\frac{p-1}{p+m-3}}
        \ge
        M
        \,,
    \end{split}
  \end{equation}
  where last inequality is guaranteed by a suitable choice of $\varGamma$, of course preserving \eqref{eq:sss_ups_Gamma0}.
\end{proof}

\section{Self similar subsolutions}
\label{s:ssb}

We look for subsolutions in the radial form
\eqref{eq:sss_sup}--\eqref{eq:sss_ups_2}. We use here the same
notation introduced in Section~\ref{s:sss}, and we look at the set
where $\ups>0$. In this Section, the choice of the constants is more
delicate; we find it convenient to establish it from the beginning. In
this connection, let us note that the constant $\tilde\mu_{1}>0$ is
defined in \eqref{eq:sss_ubs_mu1}, $\tilde\mu_{2}>0$ in
\eqref{eq:sss_ubs_mu2}, and $\tilde\mu_{3}\ge 0$ in
\eqref{eq:sss_ubs_mu3}; the $\tilde\mu_{i}$ depend on $p$, $m$,
$\alpha_{1}$, $\alpha_{2}$ only. The constant $\nu_{0}$ is chosen as
in \eqref{eq:sss_ups_c1}. We also introduce for future reference a
given constant $\lambda>0$.
\\
We fix $t_{0}>1$ such that
\begin{equation}
  \label{eq:sss_ubs_t0}
  \begin{split}
    \log t_{0}
    \ge
      \max&\Big\{
      4
      \frac{p-\alpha_{1}}{\alpha_{1}}
      \,,
      \tilde\mu_{4}^{-1}
      \,,
      \frac{2\tilde\mu_{1}(N-1)+\tilde\mu_{3}
      +
      2\tilde\mu_{1}
      \alpha_{2}^{2}}
      {\tilde\mu_{4}}
      \,,
    \\
    &\quad
      \frac{2(\nu_{0}p)^{p-1}}{\tilde\mu_{4}(p+m-3)^{p-2}}
      (
      N
      +
      \alpha_{2}^{2}
      )
      \Big\}
      \,,
  \end{split}
\end{equation}
where
\begin{equation}
  \label{eq:ssb_ups_mu4}
  \tilde\mu_{4}
  =
  2^{-\frac{\alpha_{2}(p-1)}{p-\alpha_{2}}}
  \frac{\tilde\mu_{2}\alpha_{1}}{p-\alpha_{1}}
  >0
  \,.  
\end{equation}
Next we select $\rpol_{0}>0$ such that
\begin{equation}
  \label{eq:ssb_ups_r0}
  \excf(r_{0})
  <
  \min
  \Big\{
  1
  \,,
  \lambda^{p+m-3}
  \,,
  \tilde\mu_{4}
  C_{*}^{p+m-3}
  \log t_{0}
  \Big\}
  \,,
\end{equation}
where we set
\begin{equation}
  \label{eq:ssb_ups_C}
  \begin{split}
  C_{*}^{-(p+m-3)}
  &=
    \max\Big\{
    \lambda^{-(p+m-3)}
    \,,
    2\frac{\tilde\mu_{1}(N-1)+\tilde\mu_{3}}{\excf(\rpol_{0})}
    +
    2\tilde\mu_{1}
    \alpha_{2}^{2}
    \,,
  \\
  &\quad
    \frac{2(\nu_{0}p)^{p-1}}{(p+m-3)^{p-2}}
    \frac{1}{\excf(\rpol_{0})}
    (
    N
    +
    \alpha_{2}^{2}
    \excf(\rpol_{0})
    )
    \Big\}
    \,.
  \end{split}
\end{equation}
Clearly we have to show that \eqref{eq:ssb_ups_r0}, \eqref{eq:ssb_ups_C} are compatible, that is that 
\begin{equation}
  \label{eq:ssb_ups_r0_i}
  \excf(\rpol_{0})C_{*}^{-(p+m-3)}
  <
  \tilde\mu_{4}
  \log t_{0}
\end{equation}
can be fulfilled by choosing a small enough $\rpol_{0}$. Indeed, also invoking $\excf(\rpol_{0})<\min\{1,\lambda^{p+m-3}\}$ which is certainly meaningful, we compute from \eqref{eq:ssb_ups_C}
\begin{equation}
  \label{eq:ssb_ups_r0_ii}
  \begin{split}
  \excf(\rpol_{0})C_{*}^{-(p+m-3)}
  &<
    \max\Big\{
    1
    \,,
    2\tilde\mu_{1}(N-1)+\tilde\mu_{3}
    +
    2\tilde\mu_{1}
    \alpha_{2}^{2}
    \,,
  \\
  &\quad
    \frac{2(\nu_{0}p)^{p-1}}{(p+m-3)^{p-2}}
    (
    N
    +
    \alpha_{2}^{2}
    )
    \Big\}
    \le
    \tilde\mu_{4}
    \log t_{0}
    \,,
  \end{split}
\end{equation}
according to our choice of $t_{0}$.
Finally we select
\begin{equation}
  \label{eq:ssb_ups_Gamma0}
  \varGamma
  =
  2^{\frac{\alpha_{2}(p-1)}{p-\alpha_{2}}}
  \frac{\excf(\rpol_{0})}{\log t_{0}}
  \,.
\end{equation}

1) Case $\rpol>\rpol_{0}$.
In order for $\ups$ to be a subsolution, we need to show
\begin{equation}
  \label{eq:ssb_ups_ineq}
  \pder{\ups}{t}
  \le
  \pder{I_{1}}{\rpol}
  +
  I_{1}
  \Big(
  \frac{N-1}{\rpol}
  +
  \exw'
  \Big)
  \,,
\end{equation}
which, exactly as in \eqref{eq:sss_ups_ineq2}, we may reduce to the equivalent form
\begin{equation}
  \label{eq:ssb_ups_ineq2a}
  K_{0}
  -
  K_{1}
  \le
  A
  +
  K_{2}
  \,.
\end{equation}
We have from \eqref{eq:sss_ups_ttara}
\begin{equation}
  \label{eq:ssb_ups_k0}
  K_{0}
  \le
  \frac{p-\alpha_{1}}{\alpha_{1}}
  \frac{1}{\log(t+t_{0})}
  \tar(\tau)^{\frac{1}{p-1}}
  \,.
\end{equation}
Next, from \eqref{eq:sss_rat_n} we infer, on using also the definition of $\rat(\rpol)$,
\begin{equation}
  \label{eq:ssb_ups_k1}
  \begin{split}
    -K_{1}
    &\le
      \frac{
      C_{*}^{p+m-3}\brt(\rpol,\tau)
      }{
      (p+m-3)^{p-2}
      }
      (p-\alpha_{1})^{p-1}
      \frac{\rpol}{\excf(\rpol)}
      \Big(
      \frac{N-1}{\rpol}
      +
      \exw'(\rpol)
      \Big)
      \\
    &\le
      \frac{
      C_{*}^{p+m-3}\brt(\rpol,\tau)
      }{
      (p+m-3)^{p-2}
      }
      (p-\alpha_{1})^{p-1}
      \Big(
      \frac{N-1}{\excf(\rpol)}
      +
      \alpha_{2}^{2}
      \Big)
      \\
    &=
      \tilde\mu_{1}
      C_{*}^{p+m-3}
      \brt(\rpol,\tau)
      \Big(
      \frac{N-1}{\excf(\rpol)}
      +
      \alpha_{2}^{2}
      \Big)
      \,,
  \end{split}
\end{equation}
on invoking also \eqref{eq:sss_excf_nn}, and setting
\begin{equation}
  \label{eq:sss_ubs_mu1}
  \tilde\mu_{1}
  =
  \frac{
    (p-\alpha_{1})^{p-1}
  }{
    (p+m-3)^{p-2}
  }
  \,.
\end{equation}
Next we estimate from below $K_{2}=h_{1}+h_{2}$ (see \eqref{eq:sss_ups_h1} and \eqref{eq:sss_ups_h2}). Namely
\begin{equation}
  \label{eq:ssb_ups_h1}
  \begin{split}
    h_{1}
    &\ge
      \frac{
      C_{*}^{p+m-3}
      (p-\alpha_{2})^{p}
      }{
      (p+m-3)^{p-1}
      }
      \rat(\rpol)^{\frac{1}{p-1}}
      \frac{1}{\excf(\rpol)}
    \\
    &=
      \tilde\mu_{2}
      C_{*}^{p+m-3}
      \rat(\rpol)^{\frac{1}{p-1}}
      \frac{1}{\excf(\rpol)}
      \,,
  \end{split}
\end{equation}
where we appealed to \eqref{eq:sss_rat_n} and to the definition of $\rat(\rpol)$, and we also set
\begin{equation}
  \label{eq:sss_ubs_mu2}
  \tilde\mu_{2}
  =
  \frac{
    (p-\alpha_{2})^{p}
  }{
    (p+m-3)^{p-1}
  }
  \,.
\end{equation}
Then we calculate, reasoning as in \eqref{eq:sss_ups_h2},
\begin{equation}
  \label{eq:ssb_ups_h2}
  \begin{split}
    h_{2}
    &\ge
      \frac{
      C_{*}^{p+m-3}
      \brt(\rpol,\tau)
      }{
      (p+m-3)^{p-2}
      }
      \big[
      -
      (p-2)_{-}
      (p-\alpha_{1})^{p}
      -
      \tilde d
      \big]
      \frac{1}{\excf(\rpol)}
      \\
    &=
      -
      \tilde\mu_{3}
      C_{*}^{p+m-3}
      \frac{\brt(\rpol,\tau)}{\excf(\rpol)}
      \,,
  \end{split}
\end{equation}
where we set
\begin{gather}
  \label{eq:sss_ubs_d}
  \tilde d
  =
  (c_{2})_{+}
  (p-1)
  (p-\alpha_{i})^{p-2}
  \,,
  \quad
  \text{$i=1$ if $p\ge 2$, $i=2$ if $p<2$,}
  \\
  \label{eq:sss_ubs_mu3}
  \tilde\mu_{3}
  =
  \frac{
    (p-2)_{-}
    (p-\alpha_{1})^{p}
    +
    \tilde d}{
    (p+m-3)^{p-2}
  }
  \,.
\end{gather}
We remark that $h_{2}$ does not give any contribution if $p\ge2$ and $c_{2}\le 0$. 

We collect \eqref{eq:ssb_ups_k0}--\eqref{eq:ssb_ups_h2} and see that \eqref{eq:ssb_ups_ineq2a} is implied by
\begin{equation}
  \label{eq:ssb_ups_ineq3}
  \begin{split}
    &C_{*}^{p+m-3}
      \brt(\rpol,\tau)
      \Big(
      \frac{\tilde\mu_{1}(N-1)+\tilde\mu_{3}}{\excf(\rpol)}
      +
      \tilde\mu_{1}
      \alpha_{2}^{2}
      -
      C_{*}^{-(p+m-3)}
      \Big)
    \\
    &\quad+
      \frac{p-\alpha_{1}}{\alpha_{1}}
      \frac{1}{\log(t+t_{0})}
      \tar(\tau)^{\frac{1}{p-1}}
     \\
    &\le
      \tilde\mu_{2}
      C_{*}^{p+m-3}
      \rat(\rpol)^{\frac{1}{p-1}}
      \frac{1}{\excf(\rpol)}
      \,.    
  \end{split}
\end{equation}
As in \eqref{eq:sss_ups_ineq3}, we leave on the right hand side of
\eqref{eq:ssb_ups_ineq3} only the contribution of $h_{1}$. The
constants $\tilde\mu_{1}>0$, $\tilde\mu_{2}>0$ and $\tilde\mu_{3}\ge 0$ are defined in
\eqref{eq:sss_ubs_mu1}, \eqref{eq:sss_ubs_mu2} and
\eqref{eq:sss_ubs_mu3}, respectively, and depend only on $p$, $m$,
$\alpha_{1}$, $\alpha_{2}$.

Under assumption \eqref{eq:ssb_ups_C} (where we exploit only the second term in the $\max$ function), \eqref{eq:ssb_ups_ineq3} is implied by
\begin{equation}
  \label{eq:ssb_ups_ineq4}
  \begin{split}
    & -
      \frac{1}{2}
      \brt(\rpol,\tau)
      +
      \frac{p-\alpha_{1}}{\alpha_{1}}
      \frac{1}{\log(t+t_{0})}
      \tar(\tau)^{\frac{1}{p-1}}
     \\
    &\le
      \tilde\mu_{2}
      C_{*}^{p+m-3}
      \rat(\rpol)^{\frac{1}{p-1}}
      \frac{1}{\excf(\rpol)}
      \,,
  \end{split}
\end{equation}
that is, owing to the definitions of $\brt(\rpol,\tau)$ and of $\tau$,
\begin{equation}
  \label{eq:ssb_ups_ineq5}
  \begin{split}
    &\tar(\tau)^{\frac{1}{p-1}}
    \Big[
    1
    -
    \varGamma
    \frac{2(p-\alpha_{1})}{\alpha_{1}}
    \frac{1}{\tau}
    \Big]
      \ge
    \\
    &\quad
    \rat(\rpol)^{\frac{1}{p-1}}
    \Big[
    1
    -
    2\tilde\mu_{2}
    C_{*}^{p+m-3}
    \frac{1}{\excf(\rpol)}
    \Big]
    \,.
  \end{split}
\end{equation}
However, as in Section~\ref{s:sss} we note that, if $\brt(\rpol,\tau)>0$, then
$\tar(\tau)>\rat(\rpol)$, so that $\rpol<\excf^{(-1)}(\tau)$. Thus, elementarily, \eqref{eq:ssb_ups_ineq5} follows if
\begin{equation*}
  \varGamma
  \frac{p-\alpha_{1}}{\alpha_{1}}
  \le
  \tilde\mu_{2}
  C_{*}^{p+m-3}
  \,,
\end{equation*}
which in turn is guaranteed by our assumptions \eqref{eq:ssb_ups_mu4}, \eqref{eq:ssb_ups_r0} and \eqref{eq:ssb_ups_Gamma0}.

2) Case $\rpol<\rpol_{0}$. Again, we use here the notation introduced in Section~\ref{s:sss} as far as possible. The constant $\nu_{0}$ is chosen as in \eqref{eq:sss_ups_c1}, so that $\ups$ is of class $C^{1}$ even at $\rpol=\rpol_{0}$. 
In order for $\ups$ to be a subsolution, we have to prove ($L_{1}$ is defined in \eqref{eq:sss_ups_diffs})
\begin{equation}
  \label{eq:ssb_ups_ineqs}
  \pder{\ups}{t}
  \le
  \pder{L_{1}}{\rpol}
  +
  L_{1}
  \Big(
  \frac{N-1}{\rpol}
  +
  \exw'(\rpol)
  \Big)
  \,,
  \qquad
  \rpol<\rpol_{0}
  \,.
\end{equation}
Reasoning as in \eqref{eq:sss_ups_diffs2}--\eqref{eq:sss_ups_ineqs2}
we obtain the equivalent version of \eqref{eq:ssb_ups_ineqs}
\begin{equation}
  \label{eq:ssb_ups_ineqs2}
  \begin{split}
    K_{0}&=
           (p-1)
           (t+t_{0})
           \der{}{t}
           \big(
           \tar(\tau)^{\frac{1}{p-1}}
           \big)
           \le
           \trb(\rpol,\tau)
    \\
         &\quad+
           \frac{C_{*}^{p+m-3}(\nu_{0}p)^{p-1}}{(p+m-3)^{p-2}}
           \frac{1}{\excf(\rpol_{0})}
           \Big[
           \frac{p\nu_{0}}{p+m-3}
           \frac{\rpol^{\frac{p}{p-1}}}{\excf(\rpol_{0})^{\frac{1}{p-1}}}
           -
           \trb(\rpol,\tau)
           \Big]
    \\
         &\quad
           -
           \Big(
           \frac{N-1}{\rpol}
           +
           \exw'(\rpol)
           \Big)
           \frac{C_{*}^{p+m-3}(\nu_{0}p)^{p-1}}{(p+m-3)^{p-2}}
           \frac{\trb(\rpol,\tau)\rpol}{\excf(\rpol_{0})}
           \,.
  \end{split}
\end{equation}
However, owing to \eqref{eq:ssb_ups_k0}, \eqref{eq:ssb_ups_ineqs2} is implied by
\begin{equation}
  \label{eq:ssb_ups_ineqs3}
  \begin{split}
    & \frac{p-\alpha_{1}}{\alpha_{1}}
      \frac{1}{\log(t+t_{0})}
      \tar(\tau)^{\frac{1}{p-1}}
      \le
      \trb(\rpol,\tau)
    \\
    &\quad+
      \frac{C_{*}^{p+m-3}(\nu_{0}p)^{p-1}}{(p+m-3)^{p-2}}
      \frac{1}{\excf(\rpol_{0})}
      \Big[
      \frac{p\nu_{0}}{p+m-3}
      \frac{\rpol^{\frac{p}{p-1}}}{\excf(\rpol_{0})^{\frac{1}{p-1}}}
      -
      \trb(\rpol,\tau)
      \Big]
    \\
    &\quad
      -
      \Big(
      \frac{N-1}{\rpol}
      +
      \exw'(\rpol)
      \Big)
      \frac{C_{*}^{p+m-3}(\nu_{0}p)^{p-1}}{(p+m-3)^{p-2}}
      \frac{\trb(\rpol,\tau)\rpol}{\excf(\rpol_{0})}
      \,.
  \end{split}
\end{equation}
When we neglect the positive terms on the right hand side of \eqref{eq:ssb_ups_ineqs3} (excepting the first one), and take into account that $\exw'(\rpol)\rpol\le \alpha_{2}\exw(\rpol)\le \alpha_{2}^{2}\excf(\rpol_{0})$ for $\rpol<\rpol_{0}$, we see that \eqref{eq:ssb_ups_ineqs3} in turn follows from
\begin{equation}
  \label{eq:ssb_ups_ineqs4}
  \begin{split}
    & \frac{p-\alpha_{1}}{\alpha_{1}}
      \frac{1}{\log(t+t_{0})}
      \tar(\tau)^{\frac{1}{p-1}}
      \le
      \trb(\rpol,\tau)
    \\
    &
      \times
      \Big[
      1
      -
      \frac{C_{*}^{p+m-3}(\nu_{0}p)^{p-1}}{(p+m-3)^{p-2}}
      \frac{1}{\excf(\rpol_{0})}
      (
      N
      +
      \alpha_{2}^{2}
      \excf(\rpol_{0})
      )
      \Big]
      =:
      \trb(\rpol,\tau)
      K_{4}
      \,.
  \end{split}
\end{equation}
According to our choice  of a suitably small $C_{*}$ in \eqref{eq:ssb_ups_C} (where we now exploit the third term in the $\max$ function), we have $K_{4}\ge 1/2$. Then \eqref{eq:ssb_ups_ineqs4} is a consequence of
\begin{equation}
  \label{eq:ssb_ups_ineqsE}
  \tar(\tau)^{\frac{1}{p-1}}
  \Big[
  1
  -
  \varGamma
  \frac{p-\alpha_{1}}{\alpha_{1}}
  \frac{2}{\tau}
  \Big]
  \ge
  \rat(\rpol_{0})^{\frac{1}{p-1}}
  =
  \ram(\rpol_{0})
  \ge
  \ram(\rpol)
  \,,
  \quad
  \rpol<\rpol_{0}
  \,.
\end{equation}
Next we remark that
\begin{equation}
  \label{eq:sss_ubs_t00}
  1
  -
  \varGamma
  \frac{p-\alpha_{1}}{\alpha_{1}}
  \frac{2}{\tau}
  \ge
  1
  -
  \frac{p-\alpha_{1}}{\alpha_{1}}
  \frac{2}{\log t_{0}}
  \ge
  \frac{1}{2}
  \,.
\end{equation}
The first inequality in \eqref{eq:sss_ubs_t00} follows immediately from the definition of $\tau$, and the second one from our assumption \eqref{eq:sss_ubs_t0}.
Then, \eqref{eq:ssb_ups_ineqsE} follows for all $\tau>0$ if
\begin{equation}
  \label{eq:ssb_ups_ineqspsib}
  \tar(\varGamma \log t_{0})
  \ge
  2^{p-1}
  \rat(\rpol_{0})
  \,.
\end{equation}
But let us compute from assumption \eqref{eq:ssb_ups_Gamma0} and from the analogue inequalities to \eqref{eq:powerlike_gt}, valid for $\excf$ and $\rat$,
\begin{equation*}
  \begin{split}
    \tar(\varGamma \log t_{0})
    &=
      \tar\big(
      2^{\frac{\alpha_{2}(p-1)}{p-\alpha_{2}}}
      \excf(\rpol_{0})
      \big)
      \ge
      \tar\big(
      \excf\big(2^{\frac{p-1}{p-\alpha_{2}}}\rpol_{0}\big)
      \big)
      \\
    &=
      \rat\big(2^{\frac{p-1}{p-\alpha_{2}}}\rpol_{0}\big)
      \ge
      2^{p-1}
      \rat(\rpol_{0})
      \,.
  \end{split}
\end{equation*}
Hence \eqref{eq:ssb_ups_ineqspsib} is proved, i.e., $\ups$ is a subsolution.

3) Let us estimate the support of $\ups(0)$; owing to \eqref{eq:ssb_ups_ineqspsib}, we have
\begin{equation*}
  \supp \ups(0)
  =
  \{\abs{x}\le \rpol_{1}\}
  \,,
\end{equation*}
for some $\rpol_{1}>\rpol_{0}$; specifically $\rpol_{1}$ is defined by $\rat(\rpol_{1})=\tar(\varGamma \log t_{0})$. On appealing to \eqref{eq:ssb_ups_Gamma0}, this amounts to
\begin{equation*}
  \excf(\rpol_{1})
  =
  \varGamma \log t_{0}
  =
  2^{\frac{\alpha_{2}(p-1)}{p-\alpha_{2}}}
  \excf(\rpol_{0})
  \le
  \excf\big(2^{\frac{\alpha_{2}(p-1)}{\alpha_{1}(p-\alpha_{2})}}\rpol_{0}\big)
  \,.
\end{equation*}
We used here again the inequalities \eqref{eq:powerlike_gt} written with $\exw$ formally replaced by $\excf$. We conclude
\begin{equation}
  \label{eq:ssb_ups_r1}
  \rpol_{1}
  \le
  2^{\frac{\alpha_{2}(p-1)}{\alpha_{1}(p-\alpha_{2})}}\rpol_{0}
  \,.
\end{equation}

4) As for supersolutions, the regularity required in Definition~\ref{d:weaksol} follows from direct inspection and from the explicit form of $\ups_{t}$ found above.

Our next result is essential to the proof of Theorem~\ref{t:sup_sup}.

\begin{lemma}
  \label{l:ssb_comp}
  Let $\ell>0$, $\eps>0$ be given positive numbers. Then it is possible to select $\rpol_{0}$, $C_{*}$, $t_{0}$ and $\varGamma$ so that $\ups$ is a subsolution and
  \begin{gather}
    \label{eq:ssb_comp_n}
    \supp \ups(0)
    \subset
    \{\abs{x}\le \ell\}
    \,,
    \\
    \label{eq:ssb_comp_nn}
    \ups(x,0)
    \le
    \eps
    \,,
    \quad
    x\in\RN
    \,.
  \end{gather}
\end{lemma}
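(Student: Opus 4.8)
The plan is to lean directly on the construction carried out in Section~\ref{s:ssb}: that construction shows that, whenever $\rpol_{0}$ together with $t_{0}$, $C_{*}$, $\varGamma$ are chosen according to \eqref{eq:sss_ubs_t0}--\eqref{eq:ssb_ups_Gamma0}, the function $\ups$ of the form \eqref{eq:sss_sup}--\eqref{eq:sss_ups_2} is a subsolution; moreover those constraints can be met for \emph{arbitrarily small} $\rpol_{0}$. Indeed, once $\rpol_{0}$ is small enough that $\excf(\rpol_{0})<\min\{1,\lambda^{p+m-3}\}$, the admissibility of \eqref{eq:ssb_ups_r0}--\eqref{eq:ssb_ups_C} was already checked in \eqref{eq:ssb_ups_r0_i}--\eqref{eq:ssb_ups_r0_ii}, with $t_{0}$ fixed once and for all by \eqref{eq:sss_ubs_t0} (note that $\nu_{0}\in[1-\alpha_{2}/p,1-\alpha_{1}/p]$, so the right hand side of \eqref{eq:sss_ubs_t0} is bounded independently of $\rpol_{0}$); also $C_{*}\le\lambda$ by \eqref{eq:ssb_ups_C}, and in fact $C_{*}\to0$, $\varGamma\to0$ as $\rpol_{0}\to0$. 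Thus the only thing left to do is to show that the two smallness requirements \eqref{eq:ssb_comp_n}, \eqref{eq:ssb_comp_nn} are also achieved by shrinking $\rpol_{0}$.

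For the support inclusion \eqref{eq:ssb_comp_n}, I would simply quote step 3) of Section~\ref{s:ssb}: $\supp\ups(0)=\{\abs{x}\le\rpol_{1}\}$ with $\rpol_{1}$ controlled by \eqref{eq:ssb_ups_r1}, namely $\rpol_{1}\le 2^{\frac{\alpha_{2}(p-1)}{\alpha_{1}(p-\alpha_{2})}}\rpol_{0}$; it then suffices to take $\rpol_{0}$ so small that the right hand side is $\le\ell$. For the sup bound \eqref{eq:ssb_comp_nn}, the first observation is that $\ups(\cdot,0)$ is radially nonincreasing: $\rat$ is increasing by Lemma~\ref{l:sss_rat}, so $\ups(\cdot,0)$ decreases on $\{\rpol\ge\rpol_{0}\}$ by \eqref{eq:sss_sup_2}; $\ram$ is increasing on $[0,\rpol_{0})$ since $\nu_{0}>0$, so $\ups(\cdot,0)$ decreases there too by \eqref{eq:sss_sup_3}; and the two pieces match continuously at $\rpol_{0}$ because $\trb(\rpol_{0},\tau)=\brt(\rpol_{0},\tau)$. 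Hence the maximum is attained at $\rpol=0$, where $\ram(0)=(1-\nu_{0})\rat(\rpol_{0})^{1/(p-1)}\ge0$, and therefore
\begin{equation*}
  \sup_{\RN}\ups(\cdot,0)
  =
  \ups(0,0)
  \le
  C_{*}\,t_{0}^{-\frac{1}{p+m-3}}\,
  \tar(\varGamma\log t_{0})^{\frac{1}{p+m-3}}
  =
  C_{*}\,t_{0}^{-\frac{1}{p+m-3}}\,
  \rat(\rpol_{1})^{\frac{1}{p+m-3}}
  \le
  \lambda\,\rat(\rpol_{1})^{\frac{1}{p+m-3}}
  \,,
\end{equation*}
using $\tar(\varGamma\log t_{0})=\rat(\rpol_{1})$ (the definition of $\rpol_{1}$), $t_{0}>1$ and $C_{*}\le\lambda$. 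Finally, by Lemma~\ref{l:sss_excf} and \eqref{eq:powerlike_lt}, for $\rpol\le1$ one has $\rat(\rpol)=\rpol^{p}/\excf(\rpol)\le\alpha_{2}\rpol^{p}/\exw(\rpol)\le\tfrac{\alpha_{2}}{\exw(1)}\rpol^{p-\alpha_{2}}$; since $\rat$ is increasing and $p>\alpha_{2}$, $\rat(\rpol_{1})\le\rat\big(2^{\frac{\alpha_{2}(p-1)}{\alpha_{1}(p-\alpha_{2})}}\rpol_{0}\big)\to0$ as $\rpol_{0}\to0$, whence $\ups(0,0)\to0$ and \eqref{eq:ssb_comp_nn} holds for $\rpol_{0}$ small.

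To finish, I would choose $\rpol_{0}$ small enough to satisfy simultaneously $\excf(\rpol_{0})<\min\{1,\lambda^{p+m-3}\}$, the inclusion requirement $2^{\frac{\alpha_{2}(p-1)}{\alpha_{1}(p-\alpha_{2})}}\rpol_{0}\le\ell$, and $\lambda\,\rat\big(2^{\frac{\alpha_{2}(p-1)}{\alpha_{1}(p-\alpha_{2})}}\rpol_{0}\big)^{1/(p+m-3)}\le\eps$, and then take $t_{0}$, $C_{*}$, $\varGamma$ as in \eqref{eq:sss_ubs_t0}--\eqref{eq:ssb_ups_Gamma0}. The only delicate point—and the one I expect to require the most care in writing—is checking that shrinking $\rpol_{0}$ never invalidates the subsolution property, i.e. that the whole chain \eqref{eq:ssb_ups_ineq3}--\eqref{eq:ssb_ups_ineqspsib} still applies; but this is precisely what the self-contained verification of Section~\ref{s:ssb} guarantees for every admissible $\rpol_{0}$, the admissibility being preserved because $t_{0}$ may be kept fixed while $\excf(\rpol_{0})C_{*}^{-(p+m-3)}$ stays below $\tilde\mu_{4}\log t_{0}$ by \eqref{eq:ssb_ups_r0_ii}.
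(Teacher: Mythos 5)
Your proposal is correct and follows essentially the same route as the paper: fix $t_{0}$ by \eqref{eq:sss_ubs_t0}, then drive the remaining parameters small so that both $\rpol_{1}$ (the radius of the support, bounded via \eqref{eq:ssb_ups_r1}) and $\ups(0,0)$ shrink to zero. The only cosmetic difference is the parameterization: the paper shrinks the auxiliary constant $\lambda$ in \eqref{eq:ssb_ups_r0}--\eqref{eq:ssb_ups_C}, which in turn forces $\excf(\rpol_{0})<\lambda^{p+m-3}\to 0$, and expresses its bounds $\rpol_{1}\le 2^{\frac{\alpha_{2}(p-1)}{\alpha_{1}(p-\alpha_{2})}}\excf^{(-1)}(\lambda^{p+m-3})$ and $\ups(0,0)\le\lambda\,\tar\big(2^{\frac{\alpha_{2}(p-1)}{p-\alpha_{2}}}\lambda^{p+m-3}\big)^{1/(p+m-3)}$ directly in $\lambda$; you instead keep $\lambda$ fixed and shrink $\rpol_{0}$, supplying the extra (correct) observations that $\ups(\cdot,0)$ is radially nonincreasing (so the max sits at the origin), that $\tar(\varGamma\log t_{0})=\rat(\rpol_{1})$, and that $\rat(\rpol)\le \tfrac{\alpha_{2}}{\exw(1)}\rpol^{p-\alpha_{2}}\to0$ for small $\rpol$, all of which the paper leaves implicit. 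Both routes are equivalent because decreasing $\lambda$ is exactly what forces $\rpol_{0}\to 0$ in the paper; your verification that the subsolution constraints remain admissible as $\rpol_{0}\to 0$ (via \eqref{eq:ssb_ups_r0_i}--\eqref{eq:ssb_ups_r0_ii} and the boundedness of $\nu_{0}$) is the right thing to check and is handled correctly.
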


\begin{proof}
  We choose $t_{0}$ as in \eqref{eq:sss_ubs_t0}. Then we select a suitable $\lambda>0$ in \eqref{eq:ssb_ups_r0}, \eqref{eq:ssb_ups_C}, obtaining $\rpol_{0}$, $C_{*}$ and, from \eqref{eq:ssb_ups_Gamma0}, also $\varGamma$ such that, by virtue of \eqref{eq:ssb_ups_r1} the radius $\rpol_{1}$ of the support of $\ups(0)$ satisfies
  \begin{equation}
    \label{eq:ssb_comp_i}
    \rpol_{1}
    \le
    2^{\frac{\alpha_{2}(p-1)}{\alpha_{1}(p-\alpha_{2})}}\rpol_{0}
    \le
    2^{\frac{\alpha_{2}(p-1)}{\alpha_{1}(p-\alpha_{2})}}
    \excf^{(-1)}(\lambda^{p+m-3})
    \,.
  \end{equation}
  Moreover, since $t_{0}>1$, and also invoking the choices of $\rpol_{0}$ and $C_{*}$ in \eqref{eq:ssb_ups_r0} and \eqref{eq:ssb_ups_C}, 
  \begin{equation}
    \label{eq:ssb_comp_ii}
    \begin{split}
      \max_{x\in\RN}
      \ups(x,0)
      &=
        \ups(0,0)
        =
        \frac{C_{*}}{t_{0}^{\frac{1}{p+m-3}}}
        \Big[
        \tar(\varGamma \log t_{0})^{\frac{1}{p-1}}
        -
        (1-\nu_{0})
        \rat(\rpol_{0})^{\frac{1}{p-1}}
        \Big]^{\frac{p-1}{p+m-3}}_{+}
      \\
      &\le
        C_{*}
        \tar(\varGamma \log t_{0})^{p+m-3}
        =
        C_{*}
        \tar\big(2^{\frac{\alpha_{2}(p-1)}{p-\alpha_{2}}}\excf(\rpol_{0})\big)^{p+m-3}
      \\
      &\le
        \lambda\tar\big(2^{\frac{\alpha_{2}(p-1)}{p-\alpha_{2}}}\lambda^{p+m-3}\big)^{p+m-3}
        \,.
    \end{split}
  \end{equation}
  Clearly, by selecting a suitable $\lambda>0$, both \eqref{eq:ssb_comp_n} and \eqref{eq:ssb_comp_nn} are satisfied.
\end{proof}

\section{Transformation to an inhomogeneous density equation}
\label{s:rad}

In this Section we prove Theorem~\ref{t:transform}.
\\
Essentially, we want to move the weight out of the right hand side of \eqref{eq:pde_rad}.
To this end, we perform a change of the independent variable $\rpol$, introducing the function $\rpolf(s)$, $s>0$, and the new unknown $v(s,t)=U(\rpolf(s),t)$. Then clearly
\begin{equation*}
  U_{r}
  =
  v_{s}
  \rpolf_{s}^{-1}
  \,,
  \quad
  U^{m-1}
  \abs{U_{\rpol}}^{p-2}
  U_{\rpol}
  =
  v^{m-1}
  \abs{v_{s}}^{p-2}
  v_{s}
  \abs{\rpolf_{s}}^{-(p-2)}
  \rpolf_{s}^{-1}
  \,.
\end{equation*}
On recalling also
$\partial/\partial\rpol=\rpolf_{s}^{-1}\partial/\partial s$, and on
assuming $\rpolf_{s}>0$, we get that the right hand side of
\eqref{eq:pde_rad} equals
\begin{equation}
  \label{eq:div_new}
  \pder{}{s}
  \big[
  s^{N-1}
  v^{m-1}
  \abs{v_{s}}^{p-2}
  v_{s}
  Y(s)
  \big]
  \rpolf_{s}^{-1}
  \,,
\end{equation}
where
\begin{equation}
  \label{eq:div_coeff}
  Y(s)
  :=
  \ew(\rpolf(s))
  \frac{\rpolf(s)^{N-1}}{s^{N-1}}
  \rpolf_{s}(s)^{-(p-2)-1}
  \,,
  \qquad
  s>0
  \,.
\end{equation}
We choose here $\rpolf$ as a solution to $Y(s)=1$, that is more specifically to the problem
\begin{equation}
  \label{eq:rad_rpolf}
  \rpolf_{s}(s)
  =
  \Big(
  \ew(\rpolf(s))
  \frac{\rpolf(s)^{N-1}}{s^{N-1}}
  \Big)^{\frac{1}{p-1}}
  \,,
  \qquad
  \rpolf(1)
  =
  \rpol_{*}
  \,.
\end{equation}
We prove in Section~\ref{s:app} that for a suitable $\rpol_{*}>0$, depending only on $N$, $p$ and $\exw$, the solution $\rpolf(s)$ is defined and increasing over $(0,+\infty)$ and satisfies
\begin{equation*}
  \lim_{s\to 0+}
  \rpolf(s)
  =
  0
  \,,
  \qquad
  \lim_{s\to +\infty}
  \rpolf(s)
  =
  +\infty
  \,,
\end{equation*}
so that it can be extended to a continuous function over $[0,+\infty)$. Even more, we have $\rpolf\in C^{1}([0,+\infty))$, with $\rpolf_{s}(0)=1$. 
\\
Then \eqref{eq:pde_rad} becomes
\begin{equation*}
  \rpolf^{N-1}
  \ew(\rpolf)
  \rpolf_{s}
  \pder{v}{t}
  =
  \pder{}{s}
  \big[
  s^{N-1}
  v^{m-1}
  \abs{v_{s}}^{p-2}
  v_{s}
  \big]
  \,,
\end{equation*}
that is in view of our choice $Y(s)=1$,
\begin{equation}
  \label{eq:rad_new}
  \rpolf_{s}^{p}
  \pder{v}{t}
  =
  \frac{1}{s^{N-1}}
  \pder{}{s}
  \big[
  s^{N-1}
  v^{m-1}
  \abs{v_{s}}^{p-2}
  v_{s}
  \big]
  \,.
\end{equation}
The right hand side of \eqref{eq:rad_new} can of course be understood as the divergence in the space variable $x$ such that $\abs{x}=s$; we use for the sake of simplicity the old variable names. Then if we define
\begin{equation}
  \label{eq:rad_rw}
  \rw(s)=\rpolf_{s}(s)^{p}
  \,,
\end{equation}
we have the equation \eqref{eq:rad_N}
for the radial function $v$. According to the results on $\rpolf$ outlined above, $\rw$ is a continuos positive function on $[0,+\infty)$.

Next we estimate the asymptotic behavior of $\rw(s)$. We denote by $C_{i}$ constants depending only on $N$, $p$, $\exw$, possibly varying from line to line. We get from \eqref{eq:rad_rpolf} that
\begin{equation*}
  \frac{1}{(\ew(\rpol)\rpol^{N-1})^{\frac{1}{p-1}}}
  =
  \frac{1}{\flopr(\rpol)^{\frac{N-1}{p-1}}}\flopr_{\rpol}(\rpol)
  \,,
\end{equation*}
where $\flopr=\rpolf^{(-1)}$.
Thus, on defining
\begin{equation*}
  \varphi(\rpol)
  :=
  \int_{\rpol}^{+\infty}
  \frac{\di z}{(\ew(z)z^{N-1})^{\frac{1}{p-1}}}
  \,,  
\end{equation*}
we get
\begin{equation}
  \label{eq:rad_rs}
  \varphi(\rpol)
  =
  \int_{\rpol}^{+\infty}
  \frac{1}{\flopr(z)^{\frac{N-1}{p-1}}}\flopr_{z}(z)
  \di z
  =
  \frac{p-1}{N-p}
  \flopr(\rpol)^{-\frac{N-p}{p-1}}
  \,.
\end{equation}
Define the function
\begin{equation}
  \label{eq:rad_H}
  \Hf(r)
  =
  \Big[
  \frac{\exw(\rpol)}{\rpol}
  \big(
  \ew(\rpol)
  \rpol^{N-1}
  \big)^{\frac{1}{p-1}}
  \Big]^{-1}
  \,,
  \qquad
  \rpol>0
  \,.
\end{equation}
Let us calculate, recalling that $\ew(\rpol)=e^{\exw(\rpol)}$,
\begin{equation}
  \label{eq:rad_H_der}
  \begin{split}
    \Hf'(\rpol)
    &=
      -
      \Big[
      \frac{\exw(\rpol)}{\rpol}
      \big(
      \ew(\rpol)
      \rpol^{N-1}
      \big)^{\frac{1}{p-1}}
      \Big]^{-2}
      \big(
      \ew(\rpol)
      \rpol^{N-1}
      \big)^{\frac{1}{p-1}}
      \\
    &\quad
      \times\Big\{
      \Big[
      \frac{\exw'(\rpol)}{\rpol}
      -
      \frac{\exw(\rpol)}{\rpol^{2}}
      \Big]
      +
      \frac{\exw(\rpol)}{(p-1)\rpol}
      \big(
      \ew(\rpol)
      \rpol^{N-1}
      \big)^{-1}
  \\
    &\qquad
      \times\big[
      \exw'(\rpol)
      \ew(\rpol)
      \rpol^{N-1}
      +
      (N-1)
      \ew(\rpol)
      \rpol^{N-2}
      \big]
      \Big\}
      \\
    &=
      -
      \Big[
      \frac{\exw(\rpol)}{\rpol}
      \big(
      \ew(\rpol)
      \rpol^{N-1}
      \big)^{\frac{1}{p-1}}
      \Big]^{-1}
      \Big\{
      \Big[
      \frac{\exw'(\rpol)}{\exw(\rpol)}
      -
      \frac{1}{\rpol}
      \Big]
      \\
    &\quad
      +
      \frac{1}{p-1}
      \Big[
      \exw'(\rpol)
      +
      \frac{N-1}{\rpol}
      \Big]
      \Big\}
      \,.
  \end{split}
\end{equation}
Thus
\begin{equation*}
  \begin{split}
    \frac{\Hf'(\rpol)}{\varphi'(\rpol)}
    &=
      \Big[
      \frac{\exw(\rpol)}{\rpol}
      \Big]^{-1}
      \Big\{
      \Big[
      \frac{\exw'(\rpol)}{\exw(\rpol)}
      -
      \frac{1}{\rpol}
      \Big]
      \\
    &\quad
      +
      \frac{1}{p-1}
      \Big[
      \exw'(\rpol)
      +
      \frac{N-1}{\rpol}
      \Big]
      \Big\}
      \\
    &=
      \frac{1}{\exw(\rpol)}
      \Big[
      \frac{\exw'(\rpol)\rpol}{\exw(\rpol)}
      -
      1
      \Big]
      +
      \frac{1}{p-1}
      \Big[
      \frac{\exw'(\rpol)\rpol}{\exw(\rpol)}
      +
      \frac{N - 1}{\exw(\rpol)}
      \Big]
      \,.
  \end{split}
\end{equation*}
Then, by virtue of  \eqref{eq:powerlike}, for all $\rpol>0$ we infer
\begin{equation}
  \label{eq:rad_der_ratio}
  \begin{split}
    \frac{\alpha_{1}}{p-1}
    &\le
      \Big[
      \alpha_{1}-1
      +
      \frac{N-1}{p-1}
      \Big]
      \frac{1}{\exw(\rpol)}
      +
      \frac{\alpha_{1}}{p-1}
      \le
      \frac{\Hf'(\rpol)}{\varphi'(\rpol)}
    \\
    &\le
      \Big[
      \alpha_{2}-1
      +
      \frac{N-1}{p-1}
      \Big]
      \frac{1}{\exw(\rpol)}
      +
      \frac{\alpha_{2}}{p-1}
      \,,
  \end{split}
\end{equation}
whence
\begin{equation}
  \label{eq:rad_der_ratio2}
  -
  C_{1}
  \varphi'(\rpol)
  \le
  -
  \Hf'(\rpol)
  \le
  -
  C_{2}
  \varphi'(\rpol)
  \,,
  \qquad
  \rpol \ge \rpol_{*}
  \,.
\end{equation}
Thus, by integration over $(\rpol,+\infty)$ we get
\begin{equation}
  \label{eq:rad_Hf_equiv}
  C_{1}
  \varphi(\rpol)
  \le
  \Hf(\rpol)
  \le
  C_{2}
  \varphi(\rpol)
  \,,
  \qquad
  \rpol \ge \rpol_{*}
  \,.
\end{equation}
Then, by collecting \eqref{eq:rad_rs}, \eqref{eq:rad_Hf_equiv}, we find
\begin{equation}
  \label{eq:rad_Hf_equiv2}
  C_{1}
  s^{-\frac{N-p}{p-1}}
  \le
  \Hf(\rpolf(s))
  \le
  C_{2}
  s^{-\frac{N-p}{p-1}}
  \,,
  \qquad
  s\ge 1
  \,,
\end{equation}
that is, by taking the logarithm and invoking \eqref{eq:powerlike_gt}--\eqref{eq:powerlike_lt},
\begin{equation}
  \label{eq:rad_r_asy}
  \rpolf(s)
  \sim
  \exw^{(-1)}(\log s)
  \,.
\end{equation}
\\
Then, from the differential equation \eqref{eq:rad_rpolf}, the definition \eqref{eq:rad_H} of $\Hf$ and the estimate \eqref{eq:rad_Hf_equiv2}, we get (understanding $\rpolf=\rpolf(s)$)
\begin{equation}
  \label{eq:rad_rs_est}
  \begin{split}
    \rpolf_{s}(s)
    &=
      \big(
      \ew(\rpolf)
      \rpolf^{N-1}
      \big)^{\frac{1}{p-1}}
      s^{-\frac{N-1}{p-1}}
      =
      \Hf(\rpolf)^{-1}
      \frac{\rpolf}{\exw(\rpolf)}
      s^{-\frac{N-1}{p-1}}
      \\
    &\sim
      \frac{\exw^{(-1)}(\log s)}{\log s}
      \frac{1}{s}
      \,.
  \end{split}
\end{equation}
Our estimate \eqref{eq:rad_rw_est} follows at once.

\subsection{Details on the function $\rpolf$}
\label{s:app}

We rewrite \eqref{eq:rad_rpolf} as
\begin{equation}
  \label{eq:app_ode}
  \rpolf'(s)
  =
  \nonl(\rpolf)
  s^{-1-\beta}
  \,,
  \qquad
  \rpolf(1)
  =
  \rpol_{0}
  \,,
\end{equation}
with
\begin{equation*}
  \nonl(\rpolf)
  =
  (\ew(\rpolf)
  \rpolf^{N-1}
  )^{\frac{1}{p-1}}
  \,,
  \qquad
  \beta
  =
  \frac{N-p}{p-1}
  >0
  \,.
\end{equation*}
Consider the solution $\rpolfb(z)$ to
\begin{equation}
  \label{eq:app_ode_b}
  \rpolfb'
  =
  \nonl(\rpolfb)
  \,,
  \qquad
  \rpolfb(0)
  =
  \rpol_{*}
  \,,
\end{equation}
where $\rpol_{*}$ will be chosen presently.
Owing to the superlinearity and positivity of $\nonl$, and also taking into account that $\nonl(0)=0$, the solution $\rpolfb$ is positive over $(-\infty,z_{1})$, blows up at some $z_{1}(\rpol_{*})\in (0,+\infty)$ and satisfies $\rpolfb(z)\to 0$ as $z\to-\infty$. The blow up point $z_{1}$ depends monotonically and continuosly on $\rpol_{*}$, ranging over $(0,+\infty)$. We select $\rpol_{*}$ so that $z_{1}=1/\beta$; note that $\rpol_{*}$ depends only on $N$, $p$ and $\exw$.
\\
Then the solution to \eqref{eq:app_ode} can be obtained as
\begin{equation*}
  \rpolf(s)
  :=
  \rpolfb\Big(
  \frac{1-s^{-\beta}}{\beta}
  \Big)
  \,,
\end{equation*}
which is defined in the interval
\begin{equation*}
  -\infty
  <
  \frac{1-s^{-\beta}}{\beta}
  <
  z_{1}
  =
  \frac{1}{\beta}
  \,,
  \quad
  \text{i.e.,}
  \quad
  0<s<+\infty
  \,.
\end{equation*}
Next we rewrite \eqref{eq:rad_rpolf} as
\begin{equation}
  \label{eq:app_ode_c}
  \rpolf_{s}
  =
  \nonlb(s)
  \rpolf^{1+\beta}
  \,,
  \qquad
  \rpolf(1)
  =
  \rpol_{*}
  \,.
\end{equation}
with $\beta$ as above and
\begin{equation*}
  \nonlb(s)
  =
  \ew(\rpolf(s))^{\frac{1}{p-1}}
  s^{-1-\beta}
  \,,
  \qquad
  s>0
  \,.
\end{equation*}
From \eqref{eq:app_ode_c} we get by elementary integration by separation of variables,
\begin{equation}
  \label{eq:app_sepv}
  \rpolf(s)
  =
  \Big[
  \rpol_{*}^{-\beta}
  -
  \beta
  \int_{1}^{s}
  \nonlb(y)
  \di y
  \Big]^{-\frac{1}{\beta}}
  \,,
  \qquad
  s>0
  \,,
\end{equation}
so that by a direct calculation
\begin{equation}
  \label{eq:app_der}
  \rpolf_{s}(s)
  =
  \Big[
  \nonlb(s)^{-\frac{\beta}{\beta+1}}
  \rpol_{*}^{-\beta}
  -
  \beta
  \nonlb(s)^{-\frac{\beta}{\beta+1}}
  \int_{1}^{s}
  \nonlb(y)
  \di y
  \Big]^{-\frac{\beta+1}{\beta}}
  \,.
\end{equation}
Note that according to the analysis above, $\ew(\rpolf(s))\to 1$ as $s\to 0+$, so that $\nonlb(s)^{-\beta/(\beta+1)}\to 0+$ as $s\to 0+$, while
\begin{equation*}
  \int_{1}^{s}
  \nonlb(y)
  \di y
  \to
  -\infty
  \,,
  \qquad
  s\to 0+
  \,.
\end{equation*}
Then, by L'H\^{o}pital's rule, we get from \eqref{eq:app_der} and the definition of $\nonlb$
\begin{equation*}
  \begin{split}
    \rpolf_{s}(0)^{-\frac{\beta}{\beta+1}}
    &=
      -
      \beta
      \lim_{s\to 0+}
      \ew(\rpolf(s))^{-\frac{\beta}{(\beta+1)(p-1)}}
      s^{\beta}
      \int_{1}^{s}
      \nonlb(y)
      \di y
      \\
    &=
      -
      \beta
      \lim_{s\to 0+}
      s^{\beta}
      \int_{1}^{s}
      \ew(\rpolf(y))^{\frac{1}{p-1}}
      y^{-1-\beta}
      \di y
     \\
    &=
      -
      \beta
      \lim_{s\to 0+}
      \frac{\ew(\rpolf(s))^{\frac{1}{p-1}}
      s^{-1-\beta}}{-\beta s^{-1-\beta}}
      =
      1
      \,.
  \end{split}
\end{equation*}

\def\cprime{$'$}

\end{document}